\documentclass[twoside,11pt,reqno]{amsart}
\usepackage{amsmath,amssymb,amscd,mathrsfs, verbatim}

%\begin{document}
\textheight=574pt
\textwidth=432pt
\oddsidemargin=68pt
\evensidemargin=68pt
%\setlength{\leftmarginiii}{100pt}
%\documentstyle{article}
%\documentclass[12pt]{amsart}
%\addtolength{\textwidth}{-6cm}
%\addtolength{\rightmargin}{-1.5in}
%\addtolength{\leftmargin}{1.5in}

\newtheorem{propo}{Proposition}[section]

\newtheorem{lemma}[propo]{Lemma}
\newtheorem{corol}[propo]{Corollary}
\newtheorem{theor}[propo]{Theorem}

\newcommand{\lan}{ \langle }
\newcommand{\ran}{ \rangle }

\newcommand{\diag}{\mathop{\rm diag}\nolimits}

\newcommand{\Id}{\mathop{\rm Id}\nolimits}

\newcommand{\al}{\alpha}

\def\d12{{_{12}}}

\def\acf{{algebraically closed field }}

\def\ii{{if and only if }}

\def\po{{polynomial }}

%%%% Roman Math %%%%%
%\nolimits}
%\nolimits}
%\nolimits}
%\nolimits}
%\nolimits}
%\nolimits}
%\nolimits}
%\nolimits}
%\nolimits}
%\nolimits}
%\nolimits}
%\nolimits}
%\nolimits}
%\nolimits}
%\nolimits}
%\nolimits}
%\nolimits}

%\def\A{{\rm A}}%\nolimits}
%\nolimits}

%\nolimits}

%\nolimits}
%\nolimits}
\def\F{{\rm F}}%\nolimits}
%\nolimits}
%\def\O{{\rm O}}%\nolimits}
%\nolimits}

%\def\PSL{\mathop{\rm PSL}\nolimits}
%\def\SL{\mathop{\rm SL}\nolimits}
%\def\SO{\mathop{\rm SO}\nolimits}
%\def\PSO{\mathop{\rm PSO}\nolimits}
%\def\Spin{\mathop{\rm Spin}\nolimits}
%\def\PSp{\mathop{\rm PSp}\nolimits}
%\def\Sp{\mathop{\rm Sp}\nolimits}

 \hoffset = -20mm
 \voffset = -10mm

\newcommand{\el}{\end{lemma}}

\begin{document}

\title{On generators and representations of the sporadic simple groups}
\author{L. Di Martino, M.A. Pellegrini and A.E. Zalesski}

\thanks{The second author was supported by FEMAT}

\begin{abstract}
In this paper we determine the irreducible projective representations of sporadic simple groups over an arbitrary algebraically closed field $F$, whose image contains an almost cyclic matrix of prime-power order. A matrix $M$
is called cyclic if its characteristic and minimum polynomials coincide, and we call $M$ almost cyclic if, for a suitable $\alpha\in F$, $M$ is similar to
 $\diag(\al\cdot \Id_h, M_1)$, where $M_1$ is cyclic and $0\leq h\leq n$.
The paper also contains results on the generation of sporadic simple groups by minimal sets of conjugate elements.  
\end{abstract}

\keywords {Sporadic simple groups, Generation by conjugates, Irreducible representations, Eigenvalue multiplicities}
%\ccode{2000 Mathematics Subject Classification: 20F05,  20C15, 20C20, 20C34, 20C40}
\subjclass[2010]{20F05,  20C15, 20C20, 20C34, 20C40}

\maketitle
\section{Introduction}

Problems on group generation by suitable subsets have been the subject of intensive research since the origins of group theory. Apart from its intrinsic interest, this subject gives rise to many applications, and has been used extensively in answering questions on many topics within group theory. In particular, it is well known that  some aspects of representations of finite groups are connected to the existence of  generating sets of a certain kind. 

In this paper we are interested in the generation of a group by sets of conjugates of a given group element. Concerning the sporadic simple groups, one of the first results related to generation by conjugates is due to I. Zisser (\cite{Zis}), who determined the 'covering number' of each sporadic group (see also Table 1 on p. 554 in  \cite {GS}). This is the minimum number $m=m(G)$ such that for every non-trivial conjugacy class $C$ of $G$ one has $G=\bigcup_{i=1}^m C^i$, where $C^i= \{g \in G:g= x_1x_2\cdots x_i$ for some $ x_j \in C \}$. If $m$ is the covering number of a simple group $G$, then $G$ can be generated by $m+1$ suitable elements from any given non-trivial class $C$ (see  \cite {GS}, Lemma $2.12$). It was shown in \cite{Zis} that $m(G) \leq 4$  unless $G=Fi_{22}, Fi_{23}$, in which case $m(G)=6$.

In \cite {GS} (see Lemma $7.6$ and Table 1, p. 554), these bounds were slightly improved for some of the groups, combining Zisser's results with the information provided by the Atlas of finite simple groups (\cite {Atl}, \cite {AOL}) and the knowledge of lower bounds for the degrees of representations of sporadic groups. However, no specific attention to the order of the elements of a given class $C$ was paid there, and so it remained open whether and when $G$ might be generated by, say, two elements of $C$. More recently, the problem of the generation of a simple group by sets of conjugate involutions satisfying certain specific conditions has been considered by J. Ward in his PhD dissertation (see \cite{Ward}).

In this paper we determine, for every finite sporadic simple group $G$ and most conjugacy classes $C$ of $G$, the minimum number $\alpha_G(g)$ of conjugates of  $g\in C$ required to generate $G$. The results obtained are summarized in Theorem \ref{gen} below.
%As a result, we can state the following:

\smallskip
Theorem \ref{gen} can be viewed as a refinement and improvement of the results of \cite{Zis} and \cite {GS}. Indeed, in comparison with the bounds given there, it provides better estimates for each sporadic group $G$, except when $G \in \{M_{11}, J_1, J_2, M\}$ (in the Atlas notation).

We also find out (see Section \ref{secgen}) that, unless $G=M$, whenever the order of $g \in G$ is greater than $4$, then  $\alpha_G(g) =2$, and  $\alpha_G(g)$ conjugates of $g$ can be chosen, such that their product in a suitable ordering has order equal to the highest prime divisor of $|G|$.

\smallskip
We emphasize that Theorem \ref{gen} cannot be proven via straightforward computations, except when the groups are very small. Instead, our approach makes use of character theory and the character tables of the sporadic groups, and can be outlined as follows.

Given a finite group $G$, and $k\geq 3$ (not-necessarily distinct) conjugacy classes of $G$  $C_1,\ldots, C_k$, there exists a formula in terms of the values of irreducible characters of $G$ at   $C_1,\ldots, C_k$ (see Section \ref{mach}), which gives the number $\Delta_G(C_1,\ldots,C_k)$ of solutions of the equation  $g_{1}g_{2} \cdots g_{k-1}=g_k$, where  $g_i \in C_i$ ($1 \leq i \leq{k-1}$) and $g_k$ is a fixed element of the class $C_k$. Next compute, for every maximal subgroup $H$ of $G$ that meets every class  $C_1,\ldots, C_k$, the number $\Delta_H(c_1,\ldots,c_k)$ for all the $H$-conjugacy classes $c_1,\ldots,c_k$ such that $c_i\subseteq H\cap C_i$. Denote by $\Sigma_H(C_1,\ldots,C_k)$ the sum of all such structure constants $\Delta_H(c_1,\ldots,c_k)$. Suppose that $$ \Delta_G(C_1,\ldots,C_k)  > \sum h(g_k,H)\cdot \Sigma_H(C_1,\ldots,C_k),$$ where $h(g_k,H)$ is the number of the distinct conjugates of $H$ containing $g_k$, and the sum is taken over the representatives $H$ of the $G$-classes of maximal subgroups of $G$ containing elements of all the classes $C_1,\ldots, C_k$. Then there exist elements $g_i \in C_i$ such that $G=\langle g_1,\ldots,g_{k-1}\rangle$.
In our situation, $C_1= \cdots =C_{k-1}$, and in most cases the class $C_k$ plays a special role. Namely, most often we find that  $C_k$ can be chosen uniformly, that is $C_k$ can be chosen to be the same class for any choice of $C_1$.
%, provided $|x_1| >2$. 
Recall that a group $G$ is said to be $(C_1,\ldots,C_k)$-generated if there exist $g_i \in C_i$ ($1 \leq i \leq k$) such that $g_1\cdots g_{k-1}=g_k$ and $G=\langle g_1,\ldots,g_{k-1}\rangle$. Thus, our computations yield results on $(C_1,\ldots,C_k)$-generation for $C_1= \cdots =C_{k-1}$. (See Section \ref{mach} for further details.)
% and $x_1^2 \neq 1$ for $x_1 \in C_1$.

%Denote by $\Delta_G=\Delta_G(C_1,\ldots,C_k)$ the number of distinct $k$-tuples $(g_1,\ldots,g_k)$, where $g_i\in C_i$ and $g_1g_2\cdots g_{k-1}=g_k$.
\medskip

The results on generating sets, 
%summarized in Theorem \ref{gen} (and
 described in detail in Section \ref{secgen}, allow us to study in an efficient way certain properties of the eigenvalues of matrices in the representations of the sporadic groups. More precisely, we are committed to determine all the projective irreducible representations of sporadic groups for which there exist elements of prime-power order represented by so-called almost cyclic matrices.  

The notion of almost cyclic matrix is a generalization of the notion of cyclic matrix. Namely, let $V$ be a finite dimensional vector space over a field $F$. 
%We are interested in the problem of
% classifying irreducible subgroups of $GL(V)$ generated by almost cyclic matrices,
%mainly when $F$ is algebraically closed. 
Cyclic matrices are
exactly those  whose  characteristic \po coincides with the minimum
one. (Note that a matrix $X\in {\rm End} ~V$ is cyclic \ii the
$F\langle X \rangle$-module $V$ is cyclic, that is, is generated by
a single element. This is a standard terminology of ring theory, and
the source of the term `cyclic matrix'. Matrices with simple spectrum
often arising in applications are cyclic.) 

Now, we define a matrix $M\in Mat(n,F)$ to be almost cyclic if
there exists $\al\in F$ such that $M$ is similar to
 $\diag(\al\cdot \Id_h, M_1)$, where $M_1$ is cyclic and $0\leq h\leq n$.
 
Observe that, if $\overline{F}$ denotes the algebraic closure of
$F$, and  $\lambda J$ for $\lambda \in \overline{F}$  denotes a
Jordan block with eigenvalue $\lambda $,  then a matrix
$M_{1}$ is cyclic if and only if $M_{1}$ has Jordan form
$\mathrm{diag}(\lambda _{1}J_{1},....,\lambda _{s}J_{s})$, where the
$\lambda _{j}$'s, $1\leq j\leq s$, are pairwise distinct. In
particular, suppose that $M=\diag(\al\cdot \Id_h, M_1)$,  with $0\leq h<n$, is non-scalar of order $p^{a}$ for a prime $p$,  and set $\ell =\,$char$\, F$. Then $M$
is almost cyclic if and only if the eigenvalues of $M_{1}$ are pairwise
distinct when $\ell \neq p$, and if and only if $M_{1}$ consists of a single Jordan
block when $\ell =p$.

Almost cyclic matrices arise naturally in the study  of matrix groups over finite fields. 
%For instance, if an element $g\in GL(V)$ acts irreducibly on $V/V'$, where $V'$ is some eigenspace of $g$ on $V$,  then $g$ is almost cyclic. 
Pseudo-reflections are important examples, as well as unipotent matrices with
Jordan  form consisting of a single non-trivial block.

A key contribution to the subject  is
a paper by Guralnick, Penttila, Praeger
and Saxl (\cite{GPPS}), in which the authors
classified linear groups over finite fields generated by `Dempwolff elements'.
Let $V=V(n,q)$ be an $n$-dimensional vector space over a finite field of order $q$,   $H=GL(V)=GL(n,q)$ and $g\in H$. We say that $g$ is a  Dempwolff element if
$|g|=p$ for some prime $p$ with $(p,q)=1$ and $g$ acts irreducibly on $V^g:=(\Id -g)V$.
U. Dempwolff in \cite{De} initiated the study of subgroups of $GL(n,q)$ generated by such elements, obtaining a number of valuable results. The main restriction in \cite{De} is the assumption that $2\dim V^g>\dim V$, and this assumption is held in  \cite{GPPS}.
Clearly, Dempwolff elements are almost cyclic (and are reflections if $p=2$).

Possibly, the strongest motivation to study groups
containing an almost cyclic matrix  is to contribute to the
recognition of linear groups and finite group representations
by a property of a single matrix. Answers to problems of this kind are often required in several applications.

In fact, there is an extensive literature containing important results related more or less strictly to our subject, both before and after Dempwolff's work (e.g. results due to Hering, Wagner, Suprunenko, Huffman, Wales, Tiep, Guralnick, Saxl and others). For a more detailed description of this literature, see, e.g., \cite {TZ00}, \cite{Z09}  and  \cite{DMZ1}.

%We have realized that, if one wishes to drop the restriction mentioned above, and furthermore  obtain  satisfactory results in full generality, a more conceptual approach is available, namely, one should deal with finite linear groups over an algebraically closed field
%generated by almost cyclic elements. In this context, the study of simple or quasi-simple finite groups appears to be of crucial importance. Furthermore, for technical reasons, it is reasonable to concentrate on elements of prime-power order.
The present paper may be viewed as a necessary piece of a project initiated in \cite{DMZ1} and
\cite{DMZ2}. 
%The first two instalments of this project are the papers \cite{DMZ1} and
%\cite{DMZ2}. 
 The paper \cite{DMZ1} classifies the irreducible cross-characteristic representations of finite quasi-simple groups of Lie type, for which  there exist unipotent elements represented by almost cyclic matrices.  The paper \cite{DMZ2} analyzes the occurrence of almost cyclic semisimple elements of prime-power order in cross-characteristic representations of finite quasi-simple groups of Lie type. 

Our goal here is to examine the irreducible representations of the finite simple sporadic groups and their covering groups. The techniques exploited are of computational nature, and thus differ substantially from those of \cite{DMZ1} and
\cite{DMZ2}. As it should be expected, substantial use is made of the mass of information available in the Atlas and Modular Atlas of finite groups   (\cite{Atl}, \cite {AOL}), together with the routines existing or implementable in GAP and MAGMA (\cite{GAP}, \cite{MGM}). The results we have obtained are collected in Section \ref{repres}.

Finally, we note that the connection between the two problems we address in the paper (generation by conjugates, existence of elements representable by almost cyclic matrices) is based on Lemma \ref{2.1} below, which bounds from above the degree of a linear group $G$ generated by almost cyclic matrices conjugate to a given $g \in G$, in terms of the order of $g$ and $\alpha_G(g)$. In fact, this was our original motivation for studying the above generation problem in detail. Furthermore, Lemma \ref{2.1}  together with other machinery described in Section \ref{mach} are essential in order to reduce significantly the amount of computations necessary to obtain  the results stated in Section \ref{repres}.

\medskip

{\bf Notation}. %group of Lie type
Throughout the paper we assume $F$ to be an \acf of characteristic  $\ell$.

%For any finite group $G$ we write  $1_G$ for the trivial $FG$-module %or the trivial representation of $G$,
%and  $\rho_G^{reg}$ for the regular $FG$-module (that is the free $FG$-module of rank 1). % or the regular representation of $G$.

For an $(n\times n)$-matrix $A$ over a field $F$, we denote by $m_A(x)$ and $p_A(x)$ the minimum and the characteristic polynomial of $A$, respectively.

Following the conventions introduced in the Atlas  (\cite{Atl}), we denote by $nX$ a conjugacy class of a group $G$ consisting of elements of order $n$. We warn that the letter $X$ is chosen according to the labelling adopted in GAP (\cite{GAP}). The notation used for the 26 simple sporadic groups is the standard one. The known maximal subgroups of each sporadic group can be found in \cite{W}. In the text and tables of the present paper, a representation of a given sporadic group will usually be indicated only by its degree. We emphazise here that this shortcut is justified by the fact that,  when the group has two or more representations of the same degree, the results we will obtain turn out to be independent of the choice of the representation.

\section{Basic machinery}\label{mach} 
The following elementary result establishes a useful connection between the occurrence of almost cyclic matrices in representations of irreducible linear groups and their generation by conjugates. %turns out to be crucial for our purposes.
\begin{lemma}\label{2.1}
Let $F$ be an algebraically closed field. If $G< GL(n,F)$ is a finite irreducible linear group generated by $m$ almost cyclic elements $g_i$ of the same order $d$ (modulo $Z(G)$), then $$n\leq m(d-1).$$ 
\end{lemma}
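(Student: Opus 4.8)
The plan is to bound the dimension of $V = F^n$ by exhibiting an explicit $F$-subspace that contains a generating set for $V$ as an $F G$-module, and then to control its dimension.

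First I would reduce to the case $Z(G) = 1$ by replacing each $g_i$ with a scalar multiple: since ``almost cyclic'' is unaffected by multiplication by a scalar (it only shifts $\alpha$ and the eigenvalues of the cyclic part simultaneously), and since the order of $g_i$ modulo $Z(G)$ is $d$, I may as well work with the images $\bar g_i$ in $G/Z(G)$ acting on $V$ projectively; irreducibility and the dimension $n$ are unchanged. Concretely, for each $i$ pick $\alpha_i \in F$ with $g_i$ similar to $\diag(\alpha_i \Id_{h_i}, M_i)$, $M_i$ cyclic; after scaling we may take $g_i$ itself to satisfy $g_i^d = 1$ (in the $\ell \nmid d$ case) or to be unipotent-plus-scalar appropriately (in the $\ell \mid d$ case), but for the dimension count only the Jordan/eigenvalue structure matters.

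The key step: for each generator $g_i$, the subspace $W_i := (g_i - \alpha_i \Id)V$ has dimension at most $d-1$. Indeed, writing $g_i$ in the form $\diag(\alpha_i \Id_{h_i}, M_i)$, the operator $g_i - \alpha_i \Id$ kills the first block and acts as $M_i - \alpha_i \Id$ on the cyclic part, so $\dim W_i \le \dim M_i = n - h_i$; and because $M_i$ is cyclic of finite order $d$, its Jordan form has pairwise distinct eigenvalues (when $\ell \ne d$) or is a single Jordan block of size $\le d$ (when $\ell = d$, since a unipotent element of order $p^a$ has Jordan blocks of size $\le p^a$, and ``almost cyclic'' forces one block), so in all cases $n - h_i \le d - 1$, whence $\dim W_i \le d-1$. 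Now set $W := \sum_{i=1}^m W_i$, so $\dim W \le m(d-1)$. It remains to show $W = V$. The point is that $W$ together with any single nonzero fixed vector spans an $FG$-submodule: each $g_i$ acts on $V/W$ as the scalar $\alpha_i$ (because $(g_i - \alpha_i)V \subseteq W$), so if $U$ is the preimage in $V$ of any one-dimensional $F$-subspace of $V/W$ that is... — more cleanly: $G = \langle g_1, \dots, g_m \rangle$ acts on $V/W$ through a group of simultaneously-diagonalizable-to-scalars operators, i.e. through scalars, hence every line in $V/W$ is $G$-stable; lifting back, $W$ is contained in a proper $FG$-submodule unless $W = V$ or $\dim(V/W) \le$ ... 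Actually the clean statement is: $V/W$ is an $FG$-module on which every $g_i$ acts as a scalar, so $V/W$ is a direct sum of copies of $1$-dimensional $FG$-modules; if $V/W \ne 0$ then $V$ has a proper submodule of codimension $1$ (the preimage of a codimension-one subspace of $V/W$), contradicting irreducibility unless $\dim V = 1$, which is the trivial case $n = 1 \le m(d-1)$. Hence $W = V$ and $n \le m(d-1)$.

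The main obstacle is making the ``$g_i$ acts as a scalar on $V/W$'' argument yield a contradiction with irreducibility cleanly: one must be careful that a quotient module $V/W$ on which $G$ acts by scalars still forces a proper submodule of $V$, which it does since any hyperplane of $V$ containing $W$ is then $G$-invariant (as $G$ maps $W$ into itself and acts as scalars modulo $W$). A secondary point requiring care is the characteristic-$\ell$ case, where ``cyclic'' must be interpreted via Jordan form and one invokes the stated fact (from the excerpt) that an almost cyclic $M = \diag(\alpha \Id_h, M_1)$ of order $p^a$ with $\ell = p$ has $M_1$ a single Jordan block, of size at most $p^a = d$, so again $\dim(M_1 - \alpha\Id)V \le d - 1$.
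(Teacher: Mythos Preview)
Your approach coincides with the paper's: define $W_i = (g_i - \alpha_i)V$, show that $W = \sum_i W_i$ is $G$-invariant (hence equals $V$ by irreducibility), and bound $\dim W_i \leq d-1$. Your argument that each $g_i$ acts as the scalar $\alpha_i$ on $V/W$, whence $W$ is $G$-stable, is correct and is precisely the computation the paper carries out.

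There is, however, a real gap in your bound $\dim W_i \leq d-1$. You assert that the cyclic block $M_i$ has size $n - h_i \leq d - 1$, but in fact one only has $n - h_i \leq d$: since $g_i^d = \lambda\,\Id$ for some $\lambda$, the minimal polynomial $m_{M_i}$ divides $x^d - \lambda$, and as $M_i$ is cyclic this gives $n - h_i = \deg m_{M_i} \leq d$, with equality entirely possible. (Your dichotomy ``$\ell \ne d$'' versus ``$\ell = d$'' also tacitly treats $d$ as prime, which the lemma does not assume.) The paper closes this gap by splitting on whether $\alpha_i$ lies in the spectrum of $M_i$. If it does not, then $m_{g_i}(x) = (x-\alpha_i)\,m_{M_i}(x)$ divides $x^d - \lambda$, forcing $n - h_i \leq d-1$ and $\dim W_i = n - h_i$. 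If it does, then $\ker(g_i - \alpha_i)$ has dimension at least $h_i + 1$, so $\dim W_i \leq (n - h_i) - 1 \leq d-1$. Observe that when $n - h_i = d$ one has $m_{M_i}(x) = x^d - \lambda$, and since $\alpha_i^d = \lambda$ the scalar $\alpha_i$ is automatically a root of $m_{M_i}$, placing us in the second case. This case distinction is exactly what secures the crucial ``$-1$'' and is missing from your write-up.
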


\begin{proof}
Let $V=V(n,F)$ be the underlying vector space of $GL(n,F)$. Let $\alpha_i$ be an eigenvalue of $g_i$ with eigenspace of maximal dimension. Define $V_i=Im(g_i-\alpha_iI)$. Clearly, $V_i$ is $\langle g_i\rangle$-invariant. Moreover, considering the action of $g_i$ induced on the quotient space $V/V_i$, we observe that  $g_i(V_i+x)= V_i+\alpha_i x$. 
%Indeed,
%$$g_i(V_i+x)= V_i+g_ix=V_i+g_ix-\alpha_ix+\alpha_ix=V_i+(g_i-\alpha_i)(x)+\alpha_ix= V_i+\alpha_i x.$$
Thus, for each $i=1,\ldots, m$, 
$$g_i(\sum_{j=1}^m V_j)=g_i(V_i+\sum_{j\neq i} V_j)\subseteq V_i + \alpha_i \sum_{j\neq i} V_j=\sum_{j=1}^m V_j.$$
This means that $\sum_{j=1}^m V_j$ is $G$-invariant, whence, as $G$ is irreducible, $V=\sum_{j=1}^m V_j$.
On the other hand, we  claim that, for each $i$,  $\dim V_i\leq d-1$, whence $n\leq m(d-1)$, as required. Let ${g_i}^d=\lambda I_n$, for some $\lambda \in F$.  First, suppose that $g_i$ is cyclic. Then $m_{g_i}(x)=p_{g_i}(x)$ divides $x^d-\lambda$, which implies $n\leq d$. It follows that $\dim V_i=n-\dim Ker(g_i-\alpha_iI)=n-1\leq d-1$. Next, suppose that $g_i$ is almost cyclic, but not cyclic. This means that $g_i$ is similar to a matrix of shape $\diag(\alpha_i, \ldots,\alpha_i, \bar g)$, where $\bar g$ is a cyclic matrix of size $k>0$. We have two possibilities. First, $\alpha_i$ is not an eigenvalue of $\bar g$. Then $m_{g_i}(x)=(x-\alpha_i)m_{\bar g}(x)$ divides $x^d-\lambda$, whence $1+k\leq d$ and $\dim V_i=n-(n-k)=k\leq d-1$. Next, suppose that $\alpha_i$ is an eigenvalue of $\bar g$. Then $\bar g$ is similar to a matrix of shape $\diag(J, \tilde g)$, where $J$ is the (unique) Jordan block corresponding to the eigenvalue $\alpha_i$ and $ \tilde g$ is cyclic. Let $t$ be the size of $J$. Then $m_{g_i}(x)=(x-\alpha_i)^t m_{\tilde g}(x)$ divides $x^d-\lambda$. This implies $k\leq d$ and therefore again $\dim V_i=n-(n-k+1)=k-1\leq d-1$. So the statement is proven. \end{proof}

An immediate consequence of the previous Lemma is that, if $\Phi: G\rightarrow GL(n,F)$ is an irreducible faithful representation of a finite group $G$, which can be generated by $m$ conjugate elements $g_i$ of order $d$ such that $\Phi(g_i)$ is almost cyclic, then $\dim \Phi = n \leq m(d-1)$. Therefore, for a fixed $d$, the smaller $m$ is, the smaller will be the degree and hence the number of the representations to be examined when searching for elements of order $d$ of $G$ represented by almost cyclic matrices. For example, it will turn out that, for every simple sporadic group $G$ different from $M$ and for every conjugacy class $C$ of elements of $G$ of prime-power order $d>4$, two suitable elements of $C$ are enough to generate $G$. That is, we can choose $m=2$ in the bound given above. This drastically reduces the computations necessary to prove the results stated in Section \ref{repres}. 

In view of the above considerations, we need to exploit results on the generation of a group $G$ by conjugates. Furthermore, whenever some necessary data on maximal subgroups are missing in GAP (as in the case of some covering groups), or the maximal subgroups of the group $G$ are not completely known (as in the case of the Monster), it will also be useful to know the order of the product  of certain pairs of conjugate elements. This makes all the more convenient a systematic use of the `structure constants method' (as applied, e.g., in \cite{GM}), though clearly the information obtained in this way is generally more precise than strictly required for our purposes in most cases. 
So, we now recall, for the reader's sake, the basics of the `structure constants method'.

Given a finite group $G$, let $C_1,\ldots, C_k$ be $k\geq 3$ (not-necessarily distinct) conjugacy classes of $G$.
Denote by $\Delta_G=\Delta_G(C_1,\ldots,C_k)$ the number of distinct $k$-tuples $(g_1,\ldots,g_k)$, where $g_i \in C_i$ ($1 \leq i \leq{k-1}$), $g_k$ is a fixed element of the class $C_k$, and $g_1g_2\cdots g_{k-1}=g_k$.
This structure constant can be computed using the (complex) character table. Namely, it is given by the formula
$$\Delta_G(C_1,\ldots,C_k)=\frac{|C_1|\cdots |C_{k-1}|}{|G|}\cdot \sum_{i=1}^r\frac{\chi_i(g_1)\chi_i(g_2)\cdots\chi_i(g_{k-1})\overline{\chi_i(g_k)} }{(\chi_i(1))^{k-2}},$$
where $\chi_1,\ldots,\chi_r$ are the irreducible complex characters of $G$.

Next, for a fixed $g_k \in C_k$ denote by $\Delta^*_G(C_1,\ldots,C_k)$ the number of distinct $k$-tuples $(g_1,\ldots,g_k)$ such that $g_i\in C_i$ ($1 \leq i \leq{k-1}$), $g_1\cdots g_{k-1}=g_k$, and  $G=\langle g_1,\ldots,g_{k-1}\rangle$. If $\Delta^*_G(C_1,\ldots,C_k)>0$, the group $G$ is said to be $(C_1,\ldots,C_k)$-generated. For our purposes, we aim to find the minimal $k$ for which $\Delta^*_G(C_1,\ldots,C_k)$ is positive for  certain classes $C_1,\ldots,C_k$ of elements of a given order.

To this end, let $H$ be a maximal subgroup of $G$ containing a fixed element $g_k\in C_k$, and denote by $\Sigma_H(C_1,\ldots,C_k)$ the number of distinct $(k-1)$-tuples $(g_1,\ldots,g_{k-1})\in C_1\times \ldots\times C_{k-1}$ such that $g_1\cdots g_{k-1}=g_k$ and $\langle g_1,\ldots, g_{k-1}\rangle \leq H$. The value of $\Sigma_H(C_1,\ldots,C_k)$ can be obtained as the sum of the structure constants $\Delta_H(c_1,\ldots,c_k)$ of $H$ for all the $H$-conjugacy classes $c_1,\ldots,c_k$ such that $c_i\subseteq H\cap C_i$. 
\medskip

Now, the following holds:

\begin{lemma}[e.g. see \cite{GM}]
Let $G$ be a finite group and let $H$ a subgroup of $G$ containing a fixed element $x$. Denote by $h(x,H)$ the number of the distinct conjugates of $H$ containing $x$. If  $(|x|,|N_G(H):H|)=1$, then
$$h(x,H)=\sum_{i=1}^s\frac{|C_G(x)|}{|C_{N_G(H)}(x_i)|},$$
where $x_1,\ldots,x_s$ are representatives of the $N_G(H)$-conjugacy classes fused to the $G$-class of $x$.
\end{lemma}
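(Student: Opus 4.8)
The plan is to prove the identity by double-counting the set
$$\Lambda=\{g\in G:\ g^{-1}xg\in H\},$$
where I abbreviate $N=N_G(H)$. On one side: a conjugate $gHg^{-1}$ of $H$ contains $x$ if and only if $g\in\Lambda$; since $gHg^{-1}$ depends only on the left coset $gN$, and since $\Lambda$ is a union of left cosets of $N$ (for $g\in\Lambda$ and $n\in N$ one has $(gn)^{-1}x(gn)=n^{-1}(g^{-1}xg)n\in H$, as $n$ normalises $H$), the assignment $gN\mapsto gHg^{-1}$ will give a bijection from the left cosets of $N$ contained in $\Lambda$ onto the conjugates of $H$ containing $x$; this yields $|\Lambda|=h(x,H)\cdot|N|$. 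On the other side: I partition $\Lambda$ by the value $g^{-1}xg$. As $g$ ranges over $\Lambda$ this value ranges over $H\cap x^G$ (the elements of $H$ that are $G$-conjugate to $x$), and each fibre $\{g:g^{-1}xg=y\}$ is a coset of $C_G(x)$, hence has size $|C_G(x)|$. Comparing the two counts gives
$$h(x,H)=\frac{|C_G(x)|\cdot|H\cap x^G|}{|N_G(H)|}.$$

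It then remains to re-express $|H\cap x^G|$ through the conjugacy classes of $N$, and this is the only point where the hypothesis $(|x|,|N_G(H):H|)=1$ enters. I will show it forces $N\cap x^G=H\cap x^G$: if $z\in N$ is $G$-conjugate to $x$, then, working in $N/H$ (legitimate since $H$ is normal in its normaliser $N$), the image of $z$ has order dividing both $|z|=|x|$ and $|N:H|$, so it is trivial and $z\in H$. Granting this, $H\cap x^G$ is $N$-stable and coincides with the union of those $N$-conjugacy classes that are fused to the $G$-class of $x$; taking representatives $x_1,\dots,x_s$ of these classes and applying the orbit--stabiliser theorem gives $|H\cap x^G|=\sum_{i=1}^s|N|/|C_N(x_i)|$. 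Substituting into the displayed formula and cancelling $|N|$ produces the stated expression. (This is in essence the argument of \cite{GM}.)

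The step I expect to be the crux is this last reduction: without the coprimality assumption an element of $N_G(H)$ that is $G$-conjugate to $x$ may fail to lie in $H$, so some $N_G(H)$-classes fused to the $G$-class of $x$ meet no conjugate of $H$ at all; these would contribute to the right-hand sum but not to $h(x,H)$, and the formula would be false. Everything else is routine bookkeeping, the only care needed being to stay consistent about which cosets are used on each side of the count — left cosets of $N_G(H)$ (equivalently, the conjugation action $g\cdot K=gKg^{-1}$) in the first half, and cosets of $C_G(x)$ in the second.
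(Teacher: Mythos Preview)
Your argument is correct. The double count of $\Lambda=\{g\in G: g^{-1}xg\in H\}$ is carried through cleanly, and the use of the coprimality hypothesis to force $N_G(H)\cap x^G=H\cap x^G$ is exactly the point where it is needed.

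Note, however, that the paper does not itself supply a proof of this lemma: it simply quotes the statement with a reference to \cite{GM} and moves on to the consequence $\Delta_G^*\ge\Theta_G$. So there is no ``paper's own proof'' to compare against; your write-up is precisely the standard argument that the cited reference records.
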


As a consequence, we obtain an useful lower bound for $\Delta_G^*(C_1,\ldots,C_k)$. Namely:

$$\Delta_G^*(C_1,\ldots,C_k)\geq \Theta_G(C_1,\ldots,C_k),$$
where 
$$ \Theta_G(C_1,\ldots,C_k)= \Delta_G(C_1,\ldots,C_k) -\sum h(g_k,H)\Sigma_H(C_1,\ldots,C_k),$$ 
$g_k$ is a representative of the class $C_k$, and the sum is taken over the representatives $H$ of the $G$-classes of maximal subgroups of $G$ containing elements of all the classes $C_1,\ldots, C_k$.  

Unless $G=M$, $\Theta_G=\Theta_G(C_1,\ldots,C_k)$ can be computed using the GAP routines. Thus, if $\Theta_G>0$, certainly $G$ is $(C_1,\ldots,C_k)$-generated. In particular,  in the case when $C_1= \cdots =C_{k-1}=C$, this tells us that $G$ can be generated by $k-1$ elements suitably chosen from the class $C$.

Furthermore, if $\Theta\leq 0$, in some cases one can prove that the group $G$ is not $(C_1,\ldots,C_k)$-generated (in particular, that $k-1$ is actually the minimum number of elements from a given class $C$ necessary to generate $G$), with the help of the following Lemma (of which we give the straightforward proof for the sake of clarity):

\begin{lemma}\emph{ (cf. \cite{CW})}\label{center}
Let $G$ be a finite centerless group. If $\Delta^*_G(C_1,\ldots,C_k)>0$, then $\Delta^*_G(C_1,\ldots,C_k)\geq |C_G(g_k)|$, for any $g_k\in C_k$.
\end{lemma}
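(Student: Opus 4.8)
The plan is to exploit the natural action of the center-related group structure, but since $G$ is centerless the more productive route is to produce many distinct generating $k$-tuples out of a single one by conjugating. Suppose $\Delta^*_G(C_1,\ldots,C_k)>0$, so there is at least one $k$-tuple $(g_1,\ldots,g_k)$ with $g_i\in C_i$, $g_1\cdots g_{k-1}=g_k$, $g_k$ the fixed representative of $C_k$, and $G=\langle g_1,\ldots,g_{k-1}\rangle$. I would first observe that for any $x\in C_G(g_k)$ the conjugated tuple $(x g_1 x^{-1},\ldots, x g_{k-1}x^{-1}, x g_k x^{-1})$ again satisfies all the defining conditions: each $x g_i x^{-1}$ lies in the same class $C_i$, the product relation is preserved under conjugation, the last entry is $x g_k x^{-1}=g_k$ since $x$ centralizes $g_k$, and $\langle x g_1 x^{-1},\ldots, x g_{k-1} x^{-1}\rangle = x\langle g_1,\ldots,g_{k-1}\rangle x^{-1}=xGx^{-1}=G$. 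Hence conjugation by $C_G(g_k)$ acts on the set of such tuples counted by $\Delta^*_G(C_1,\ldots,C_k)$.

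Next I would show this action is free, i.e. that the stabilizer of the tuple $(g_1,\ldots,g_{k-1})$ in $C_G(g_k)$ is trivial. If $x\in C_G(g_k)$ fixes every $g_i$ by conjugation, then $x$ centralizes each $g_i$, hence centralizes $\langle g_1,\ldots,g_{k-1}\rangle = G$; thus $x\in Z(G)=1$. Therefore the orbit of $(g_1,\ldots,g_{k-1})$ under $C_G(g_k)$ has size exactly $|C_G(g_k)|$, and since every element of this orbit is one of the tuples being counted, $\Delta^*_G(C_1,\ldots,C_k)\geq |C_G(g_k)|$.

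This argument is essentially complete as sketched; there is no serious obstacle, since every step is a direct verification. The only point requiring a moment's care is the freeness of the action, which is precisely where the centerless hypothesis enters and where the generation condition $G=\langle g_1,\ldots,g_{k-1}\rangle$ is used in an essential way. One should also note in passing that the bound is independent of which representative $g_k\in C_k$ is chosen, since centralizers of conjugate elements are conjugate and hence have equal order; this justifies the phrasing "for any $g_k\in C_k$" in the statement.
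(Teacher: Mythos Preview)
Your proof is correct and follows essentially the same approach as the paper: conjugate a single generating tuple by elements of $C_G(g_k)$ and use $Z(G)=1$ together with $\langle g_1,\ldots,g_{k-1}\rangle=G$ to conclude that distinct centralizer elements yield distinct tuples. Your phrasing in terms of a free action is slightly more structured, but the argument is the same.
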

\begin{proof}
As $\Delta^*_G(C_1,\ldots,C_k)>0$, for any fixed element $g_k\in C_k$ there exists at least one $(k-1)$-tuple $(g_1,\ldots,g_{k-1})$ such that 
$$(\ast)\qquad g_i\in C_i,\quad  g_1\cdots g_{k-1}=g_k \,\textrm{ and  }\,G=\langle g_1,\ldots,g_{k-1}\rangle.$$ Let $x \in C_G(g_k)$. Then
$$x(g_1\cdots g_{k-1})x^{-1}=(xg_1x^{-1})\cdots(xg_{k-1}x^{-1})=xg_kx^{-1}=g_k.$$
Thus, the $(k-1)$-tuple $(xg_1x^{-1},\ldots,xg_{k-1} x^{-1})$ also satisfies $(\ast)$.
Furthermore, if $x_1,x_2$ are distinct elements of $C_G(g_k)$, then the $(k-1)$-tuples  $(x_1g_1x_1^{-1},\ldots,x_1g_{k-1} x_1^{-1})$ and $(x_2g_1x_2^{-1},\ldots,x_2g_{k-1} x_2^{-1})$ are also distinct, since $Z(G)=\{1\}$. This implies that there are at least $|C_G(g_k)|$ $(k-1)$-tuples  $(g_1,\ldots,g_{k-1})$ satisying $(\ast)$. That is, $\Delta_G^\ast(C_1,\ldots,C_k)\geq |C_G(g_k)|$.
\end{proof}

So, obviously, if $\Delta_G(C_1,\dots,C_k)<|C_G(g_k)|$ for $g_k\in C_k$, the previous Lemma tells us that $G$ cannot be $(C_1,\ldots,C_k)$-generated.\\

%\geq \Delta_G(C_1,\ldots,C_k)-\sum h(g_k,H)\Sigma_H(C_1,\ldots,C_k),$$
%where $g_k$ is a representative of the class $C_k$ and the sum is taken over the representatives $H$ of the $G$-classes of maximal subgroups of $G$ containing elements of all the classes $C_1,\ldots, C_k$.\\

Recall that a non-scalar $g \in GL(V)$ is called pseudoreflection if $g$ acts scalarly on a hyperplane of $V$. Observe that the matrix of a pseudoreflection is almost cyclic. We will use the following result:

\begin{lemma}\label{wag}
Let  $G< GL(n,F)$ be a finite irreducible linear group generated by pseudoreflections. Then $G$ cannot be a sporadic simple group.
 \end{lemma}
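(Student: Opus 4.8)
The plan is to invoke the classification of finite irreducible groups generated by pseudoreflections, which is completely known, and then simply observe that no sporadic simple group appears on the resulting list. More precisely, over a field of characteristic zero (or, more generally, in the non-modular situation) the irreducible complex reflection groups were classified by Shephard and Todd: they consist of the infinite family $G(m,p,n)$ and $34$ exceptional groups; among these the ones that happen to be simple (or have a simple composition factor of the right shape) are very restricted — essentially certain alternating groups $\Alt_n$ acting as the symmetric-group reflection representation, together with a handful of small groups like $\PSU_4(2)\cong\PSp_4(3)$, $\PSp_6(2)$, etc., and none of these is a sporadic simple group. In the modular case one must instead appeal to the classification of irreducible subgroups of $\GL(n,F)$ generated by reflections in the defining characteristic (work of Kantor, Wagner, Zalesski–Serezkin, and others — hence the label \texttt{wag} on the lemma), whose conclusion is again a short explicit list, none of whose members is sporadic.

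The key steps, in order, would be: (i) reduce to the case where $G$ is a \emph{primitive} linear group, since an impritive irreducible group generated by pseudoreflections lies in some $G(m,p,n)\wr$-type configuration and a sporadic simple group is not of that form; (ii) for $\ell=0$ (equivalently $\ell\nmid |G|$), quote the Shephard–Todd–Clark–Ewing list of primitive irreducible reflection groups and note that each entry has order, or a central quotient, incompatible with the order of any sporadic simple group — one can just run down the $26$ orders $|M_{11}|,\dots,|M|$ against the finite list; (iii) for $\ell \mid |G|$, quote the classification of primitive irreducible reflection groups in cross-characteristic and in defining characteristic (the "Wagner-type" theorems), whose irreducible-reflection-generated members are again completely enumerated and, being essentially classical groups of small rank, symmetric/alternating groups in their natural reflection representations, or small exceptional configurations, do not include any sporadic group. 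Since the statement of the lemma is purely an existence/non-existence assertion, no numerics beyond this comparison are needed.

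The main obstacle will be locating a single clean reference that covers \emph{all} characteristics uniformly: the characteristic-zero case is textbook (Shephard–Todd), but the positive-characteristic case is spread over several papers, and one has to be careful that "reflection" there means precisely "pseudoreflection" in our sense (a non-scalar element scalar on a hyperplane) rather than the narrower "transvection or homology" sometimes used. I would therefore structure the proof as a short case split on whether $\ell \mid |G|$, cite the appropriate classification in each case, and in each case point to the explicit list and note by inspection that the simple groups occurring there are (subgroups related to) classical groups of small dimension, $\Alt_n$, or small exceptional groups — in particular that the sporadic simple groups never arise, because, e.g., their minimal faithful (projective) representation degrees, read off from the Atlas, exceed what the reflection-group lists allow, or because their orders do not divide the orders of any group on those lists. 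An alternative, more self-contained route — which I would mention but not carry out — is to use Lemma~\ref{2.1}: a pseudoreflection has order $d$ for some $d$ and multiplicity-$1$ "moving part", so if $G$ is generated by $m$ conjugate pseudoreflections then $n\le m(d-1)$, and combined with the known lower bounds for the degrees of the (projective) irreducible representations of sporadic groups together with the known element orders, this already forces a contradiction in all but finitely many configurations, which can then be eliminated by hand. Either way, the heart of the argument is a finite check against a classification, not a computation.
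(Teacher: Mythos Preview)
Your approach is correct and matches the paper's: the authors simply cite the classification theorems of Zalesski--Serezhkin and Wagner for finite irreducible linear groups generated by pseudoreflections and observe that no sporadic simple group appears there. Your write-up is considerably more detailed (the case split on characteristic, the Shephard--Todd invocation, the alternative via Lemma~\ref{2.1}), but the substance is the same one-line appeal to classification.
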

 
 \begin{proof}
 The statement follows immediately from the classification theorems due to A.E Zalesski and  V.N. Serezhkin (see \cite{ZS}) and A.O. Wagner (see \cite{Wag1, Wag2}).
 \end{proof}

Finally, for the reader's convenience, we quote the following:

\begin{propo} [\cite{Zal}\label{Zal}]
Let $G$ be a quasi-simple finite sporadic group and let $Z(G)$ be its center. Let $\ell$ be a prime and $P$ be a Sylow $\ell$-subgroup of $G$. Let $K$ be a splitting field for $G$ of characteristic $\ell$ and let $M$ be a faithful irreducible $KG$-module. Suppose that $P$ is cyclic and $M|_P$ contains no submodule isomorphic to the regular $KP$-module. Then $\ell> 2$ and one of the following holds:
\begin{enumerate}
\item $G=M_{11}$, $|P|=11$ and $\dim M=9$ or $10$;
\item $G=M_{23}$, $|P|=23$ and $\dim M=21$;
\item $G/Z(G)=M_{12}$ or $M_{22}$, $|P|=11$, $\dim M=10$ and $|Z(G)|=2$;
\item $G/Z(G)=Suz$, $|P|=7$ or $13$, $\dim M=12$ and $|Z(G)|=6$;
\item $G/Z(G)=J_3$, $|P|=19$, $\dim M=18$ and $|Z(G)|=3$;
\item $G/Z(G)=Ru$, $|P|=29$, $\dim M=28$ and $|Z(G)|=2$;
\item $G/Z(G)=J_2$, $|P|=7$, $\dim M=6$ and $|Z(G)|=2$;
\item $G/Z(G)=Co_1$, $|P|=13$ and $\dim M=24$;
\item $G=J_1$, $|P|=11$ and $\dim M=7$.
\end{enumerate}
Conversely, in all these cases $M|_P$ does not contain the regular $KP$-submodule. So $\dim M< |P|$, except for the following cases, where $|P|=\ell$  and $\dim M =2(\ell-1)$:
\begin{itemize}
\item[(\rm{i})] $\ell=7$, $G/Z(G)=Suz$;
\item[(\rm{ii})] $\ell=13$, $G/Z(G)=Co_1$.
\end{itemize}
Finally, if $g$ is a generator of $P$, the degree of the minimum polynomial of $g|_M$ equals $\dim M$ (and hence $g|_M$ is cyclic), except for the cases $(\rm{i})$ and $(\rm{ii})$, where the degree is $(\dim M)/2$.
\end{propo}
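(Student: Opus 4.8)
The result is due to \cite{Zal}; the plan I would follow reduces everything, by elementary block theory, to blocks of defect one, after which the assertions are read off the Brauer trees of the sporadic groups together with the ordinary and modular Atlas data \cite{Atl,AOL}. First observe that $\ell=2$ is impossible: if a Sylow $2$-subgroup $P$ were cyclic, then $\Aut(P)$ is a $2$-group while $P$ is a Sylow $2$-subgroup of $C_G(P)$, which forces $N_G(P)=C_G(P)$; Burnside's normal $p$-complement theorem then gives $G$ a normal $2$-complement of $2$-power index, impossible for a quasi-simple group of even order. Hence $\ell$ is odd. Since $KP$ is self-injective, a submodule of $M|_P$ isomorphic to the regular module $KP$ splits off as a direct summand, so the hypothesis says precisely that, for a generator $g$ of $P$, no Jordan block of $g$ on $M$ has size $|P|$; a priori this bounds neither $\dim M$ nor the degree of the minimum polynomial, so those parts of the conclusion carry genuine content. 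Now let $B$ be the block containing the faithful module $M$, with defect group $D$: then $D$ is $G$-conjugate into $P$, hence cyclic, and a vertex $Q$ of the simple module $M$ satisfies $Q\le D$. For every quasi-simple sporadic group the odd primes $\ell$ with cyclic Sylow $\ell$-subgroup in fact divide $|G|$ exactly, so $|P|=\ell$; therefore either $Q=1$, in which case $M$ is projective and $M|_P$ is free, contrary to hypothesis, or $Q=P$ and $B$ has defect one. (Should $|P|=\ell^a$ with $a>1$ ever occur, the cyclic-defect theory used below applies unchanged.)

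For a block $B$ with cyclic defect group $P$, inertial index $e$ dividing $|P|-1$ and exceptional multiplicity $m=(|P|-1)/e$, the simple $B$-modules are the edges of the Brauer tree $T$, and the restriction $S|_P$ of a simple module $S$ is determined by the planar embedding of $T$ through the standard walk around the two endpoints of the edge labelled by $S$: it is a direct sum $V_{j_1}\oplus V_{j_2}$ of at most two uniserial $KP$-modules, where $V_k=K[x]/(x-1)^k$, while $\dim S=j_1+j_2$ is recovered independently from the decomposition matrix via $\dim S=\sum_i d_{iS}\chi_i(1)$. Requiring that neither summand be the regular module $V_{|P|}$ forces $j_1,j_2<|P|$, so $M|_P$ is either a single Jordan block $V_j$ with $j<|P|$ --- whence $\dim M<|P|$ and $g|_M$ is cyclic with minimum polynomial of degree $\dim M$ --- or $M|_P\cong V_{|P|-1}\oplus V_{|P|-1}$ with a single eigenvalue, whence $\dim M=2(|P|-1)$ and the minimum polynomial has degree $|P|-1=(\dim M)/2$. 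This is exactly the dichotomy in the statement, and it remains to carry out the finite check: running through the quasi-simple sporadic groups, the odd primes with cyclic Sylow, and the faithful blocks of defect one, and using \cite{Atl,AOL} together with the tabulated Brauer trees of the sporadic groups, one verifies that the first alternative occurs precisely in cases (1)--(9) other than the two marked ones, and the second precisely in (i) and (ii), reading $S|_P$ off the relevant tree in each case to obtain the ``conversely'' clause and the assertion on the minimum polynomial.

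The delicate point is the reduction rather than any individual computation: one must be sure that no faithful block with a proper nontrivial cyclic defect group contributes an additional example --- which requires invoking the explicit list of $\ell$-blocks of each sporadic group and of its covering groups, where the modular data is most scattered --- and, in the two exceptional cases ($Suz$ with $\ell=7$ and $Co_1$ with $\ell=13$), one must confirm from the Brauer tree that the two equal Jordan blocks genuinely share an eigenvalue rather than combining into one block of degree $\dim M$. Everything else is bookkeeping against known tables.
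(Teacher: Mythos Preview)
The paper does not prove this proposition at all: it is quoted verbatim from \cite{Zal} and used as a black box, so there is no ``paper's proof'' to compare against. Your outline is therefore being judged on its own merits, and the overall architecture --- rule out $\ell=2$, reduce to faithful blocks of defect one, then use cyclic-defect theory together with the known Brauer trees of the sporadic groups --- is the right one and is almost certainly what Zalesski does.

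There is, however, a genuine gap in the middle of your argument. You assert that for a simple module $S$ in a cyclic defect block the restriction $S|_P$ is ``a direct sum $V_{j_1}\oplus V_{j_2}$ of at most two uniserial $KP$-modules'', and that this is read off from the walk around the Brauer tree. That is not what the tree walk computes: the two walks around the endpoints of an edge describe the composition series of the two uniserial pieces of $\operatorname{rad}P(S)/\operatorname{soc}P(S)$ \emph{as $B$-modules}, not the Jordan structure of $S|_P$. In general $S|_P$ can have more than two indecomposable summands. What Green correspondence actually gives (using that $P$ is TI of prime order) is $S|_{N_G(P)}\cong f(S)\oplus(\text{projective})$, whence $S|_P\cong f(S)|_P\oplus V_{|P|}^{\,r}$; the hypothesis forces $r=0$, but $f(S)|_P$ need not be indecomposable --- its shape depends on the dimension $d$ of the simple modules in the Brauer-correspondent block $b$ of $kN_G(P)$, and one gets $f(S)|_P\cong V_j^{\,d}$ for some $j\le |P|-1$. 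It is the local datum $d$ (equal to $1$ in most of the listed cases, and to $2$ precisely in your cases (i) and (ii)) that produces the dichotomy, not any general ``at most two summands'' principle.

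Consequently your sentence ``Requiring that neither summand be the regular module forces $j_1,j_2<|P|$, so $M|_P$ is either a single Jordan block \ldots\ or $M|_P\cong V_{|P|-1}\oplus V_{|P|-1}$'' is a non sequitur as written: nothing you have said excludes, say, $V_3\oplus V_5$, or three equal blocks. The dichotomy is correct, but it must come either from the explicit structure of $N_G(P)$ and $b$ for each pair $(G,\ell)$, or from the case-by-case inspection you defer to the last paragraph --- it cannot be asserted in advance as a consequence of Brauer-tree combinatorics. Reorganising the argument so that Green correspondence gives $M|_P\cong V_j^{\,d}$ and then checking that $d\in\{1,2\}$ for the relevant sporadic blocks (with $d=2$ only for $6{\cdot}Suz$ at $\ell=7$ and $2{\cdot}Co_1$ at $\ell=13$) would close the gap.
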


An immediate consequence of the above Proposition is the following, which will be useful in the sequel:

\begin{corol}\label{l+1}
Let $G$ be a quasi-simple finite sporadic group and let $F$ be an algebraically closed field of positive characteristic $\ell$. Let $\Phi$ be a faithful irreducible representation of $G$ over $F$. Suppose that $P= \lan g \ran$ is a Sylow $\ell$-subgroup of $G$ of order $\ell$. The following holds:
 
\begin{enumerate}
\item if $\dim \Phi=\ell$, then $\Phi(g)$ is cyclic;
\item if $\dim \Phi=\ell+1$, then $\Phi(g)$ is almost cyclic.
\end{enumerate}
\end{corol}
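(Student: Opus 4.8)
The plan is to derive Corollary \ref{l+1} directly from Proposition \ref{Zal} by examining the behaviour of the generator $g$ of the (cyclic, order $\ell$) Sylow $\ell$-subgroup $P$ on the module $M$ affording $\Phi$. The key structural fact is that for a cyclic group $P=\langle g\rangle$ of prime order $\ell$ over a field $F$ of characteristic $\ell$, the indecomposable $FP$-modules are precisely the Jordan blocks $J_1,\dots,J_\ell$ of sizes $1,\dots,\ell$, with $J_\ell$ being the regular module. So $M|_P$ decomposes as a direct sum of such blocks, and $\Phi(g)$ is cyclic if and only if no Jordan block size is repeated, i.e. if and only if $M|_P\cong J_{i_1}\oplus\cdots\oplus J_{i_r}$ with the $i_j$ pairwise distinct.

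First I would dispose of case (1). If $\dim\Phi=\ell$, then since $M|_P$ contains no regular (i.e. $\ell$-dimensional) summand by Proposition \ref{Zal}, every Jordan block of $\Phi(g)$ has size at most $\ell-1$; but the block sizes sum to $\ell$, and the only way to partition $\ell$ into parts each $\le \ell-1$ with no repeated part would need to be checked — actually the cleanest route is to invoke the last sentence of Proposition \ref{Zal}: in all cases listed there (and case (1) corresponds to $G=M_{11}$ with $\dim M=10=\ell-1$… here one must be slightly careful, since $\dim\Phi=\ell$ is not literally among the enumerated cases). The correct argument: the hypothesis $\dim\Phi=\ell=|P|$ forces, via the absence of a regular summand, that $M|_P$ is a sum of blocks of total dimension $\ell$ with all sizes $\le\ell-1$; combined with the concluding assertion of Proposition \ref{Zal} that the minimal polynomial of $g|_M$ has degree $\dim M$ whenever we are in one of the non-exceptional listed configurations — and noting that the exceptional cases (i),(ii) have $\dim M=2(\ell-1)\neq\ell$ — we conclude $m_{\Phi(g)}$ has degree $\ell=\dim\Phi$, so $\Phi(g)$ is cyclic. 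I would double-check by hand that no configuration with $\dim\Phi=\ell$ can arise outside the Proposition's list, using that $\dim\Phi<|P|$ in all non-exceptional cases there; this seems to leave $\dim\Phi=\ell$ impossible unless one is in an exceptional case, which contradicts $\dim\Phi=\ell$, so case (1) may in fact be vacuous or needs the hypothesis reinterpreted as $\dim\Phi\le\ell$. This subtlety is the main thing to nail down.

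For case (2), suppose $\dim\Phi=\ell+1$. Again $M|_P$ has no regular summand, so all Jordan blocks of $\Phi(g)$ have size $\le\ell-1$, and their sizes sum to $\ell+1$. I want to show $\Phi(g)$ is almost cyclic, i.e. $M|_P\cong J_1^{\,h}\oplus J_k$ for a single non-trivial block $J_k$ (equivalently, at most one block size is repeated and the repeated size is $1$). The argument: if $\dim\Phi=\ell+1$ then by Proposition \ref{Zal} we are either in one of cases (1)--(9) with $\dim M=\ell+1$, or in an exceptional case (i),(ii) with $2(\ell-1)=\ell+1$, i.e. $\ell=3$; I would scan the list for entries with $\dim M=|P|+1$. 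In each such entry the final clause of the Proposition gives that $m_{g|_M}$ has degree $\dim M=\ell+1$ (non-exceptional case) or degree $(\dim M)/2$ (exceptional). In the first situation $\Phi(g)$ is cyclic, hence trivially almost cyclic. In the second, $\ell=3$, $\dim M=4$, the minimal polynomial has degree $2$, so $\Phi(g)$ has Jordan type $J_2\oplus J_2$ or $J_2\oplus J_1\oplus J_1$; the former has minimal polynomial $(x-1)^2$ of degree $2$ and is visibly almost cyclic ($h=0$ is not allowed but $g$ is then cyclic of type… ), the latter is $\mathrm{diag}(1,1,J_2)$ which is exactly almost cyclic with $h=2$. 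Either way the conclusion holds.

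The main obstacle I anticipate is not any deep computation but rather the bookkeeping around exactly which $(G,\ell,\dim M)$ triples in Proposition \ref{Zal} satisfy $\dim M\in\{\ell,\ell+1\}$, and making sure the hypothesis of Corollary \ref{l+1} (namely $P$ cyclic of order exactly $\ell$, and $\dim\Phi$ taking the stated value) actually forces us into the Proposition's list — i.e. that $M|_P$ genuinely omits the regular summand. That implication needs a one-line justification: if $M|_P$ contained $J_\ell$ as a summand then $\dim M\ge\ell$, which is compatible with case (1) but there the hypothesis of Proposition \ref{Zal} is precisely that no such summand occurs, so one should phrase Corollary \ref{l+1} as applying when $\dim\Phi\le\ell+1$, whence $M|_P$ can contain at most one copy of $J_\ell$ and, if it does, $\dim\Phi=\ell$ forces $M|_P=J_\ell$ which is itself cyclic — handling that boundary case separately and trivially. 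Once this is pinned down, both parts are immediate from the Jordan-block description and the final sentence of Proposition \ref{Zal}.
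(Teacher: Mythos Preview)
You have read Proposition \ref{Zal} backwards. The proposition does \emph{not} assert that $M|_P$ lacks a regular summand; rather, it classifies what happens \emph{if} $M|_P$ lacks one. Your opening move in both cases --- ``since $M|_P$ contains no regular summand by Proposition \ref{Zal}'' --- is therefore a non sequitur, and everything downstream (the search for list entries with $\dim M=\ell$ or $\ell+1$, the $\ell=3$ exceptional-case analysis, the worry that case (1) may be vacuous) is built on this misreading.

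The paper's argument runs in exactly the opposite direction. One first disposes of the exceptional cases (i) and (ii): for $G/Z(G)\in\{Suz,Co_1\}$ with the relevant primes, the minimal faithful degree exceeds $\ell+1$ (by \cite{J,HM}), so the hypothesis $\dim\Phi\in\{\ell,\ell+1\}$ cannot arise there. For every remaining case, Proposition \ref{Zal} says that if $M|_P$ had no regular summand then $\dim M<|P|=\ell$, contradicting $\dim\Phi\ge\ell$. Hence $M|_P$ \emph{does} contain a copy of the regular module $J_\ell$. Now ``degree reasons'' finish instantly: if $\dim\Phi=\ell$ then $M|_P\cong J_\ell$, a single Jordan block, so $\Phi(g)$ is cyclic; if $\dim\Phi=\ell+1$ then $M|_P\cong J_\ell\oplus J_1$, so $\Phi(g)=\diag(1,J_\ell)$ is almost cyclic.

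You actually brush against the right idea in your final paragraph (``if it does, $\dim\Phi=\ell$ forces $M|_P=J_\ell$ which is itself cyclic''), but you treat it as a boundary nuisance rather than the entire proof, and you never carry it over to case (2). Once you invert the logic --- use the contrapositive of Proposition \ref{Zal} to \emph{force} a regular summand --- the corollary is three lines, and none of the minimal-polynomial clauses or the list-scanning is needed.
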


\begin{proof}
If $G/Z(G)\in \{Suz, Co_1\}$, then $\dim \Phi>\ell+1$ (e.g. see \cite{J, HM}). By Proposition \ref{Zal}, it follows that $\Phi|_{P}$ contains as a constituent the regular representation of $P$. The statement follows by degree reasons.
\end{proof}

\section{The sporadic groups: generation by conjugates}\label{secgen}

In this section we describe the results that we have obtained on the generation by sets of conjugate elements of a sporadic simple group $G$, exploiting the machinery introduced in Section \ref{mach}. We have made a systematic use of GAP in order to analyze  the generation of $G$ by conjugates via the `structure constants method'.  It turns out that this gives, for each element $g$ of order $n>1$, a set of conjugates of $g$ generating $G$, which is in most cases of minimal size, though not always. In the latter case, a generating set of minimal size has been obtained by direct computation, using either GAP or MAGMA (\cite{MGM}), except when the group involved is too large. More precisely, the results obtained are the following:
\medskip

\subsection{}
 Let $nX$ denote a conjugacy class of $G$ consisting of elements of order $n>1$.  Computing via the GAP routines $\Theta_G$, and thus obtaining a lower bound for $\Delta^\ast_G$, we get the following:
 
\begin{itemize}
\item $G=M_{11}$. Then $G$ is  $(nX,nX,11a)$-generated for $nX\neq 2a,3a$,  while it is $(3a,3a,8a)$-generated and $(2a,2a,2a,11a)$-generated;

\item $G=M_{12}$. Then $G$ is  $(nX,nX,11a)$-generated for $nX\neq 2a,2b, 3a$, while it is $(3a,3a,6a)$-generated, $(2a,2a,2a,11a)$-generated and $(2b,2b,2b,6a)$-generated;

\item $G=J_1$. Then $G$ is  $(nX,nX,19a)$-generated for $nX\neq 2a$, and $(2a,2a,2a,19a)$-generated;

\item $G=M_{22}$. Then $G$ is  $(nX,nX,11a)$-generated for $nX\neq 2a$, and $(2a,2a,2a,11a)$-generated;

\item $G=J_2$. Then $G$ is  $(nX,nX,7a)$-generated for $nX\neq 2a,2b,3a,4a$, while it is $(4a,4a,5c)$-generated, $(nX,nX,nX,7a)$-generated for $nX=2b,3a$, and $(2a,2a,2a,2a,7a)$-generated;

\item $G=M_{23}$. Then $G$ is  $(nX,nX,23a)$-generated for $nX\neq 2a$, and $(2a,2a,2a,23a)$-generated;

\item $G=HS$. Then $G$ is  $(nX,nX,11a)$-generated for $nX\neq 2a,2b,4a$, and $(nX,nX,nX,11a)$-generated for $nX=2a,2b,4a$;

\item $G=J_{3}$. Then $G$ is  $(nX,nX,19a)$-generated for $nX\neq 2a$, and $(2a,2a,2a,19a)$-generated;

\item $G=M_{24}$. Then $G$ is  $(nX,nX,23a)$-generated for $nX\neq 2a,2b$, and $(nX,nX,nX,23a)$-generated for $nX=2a,2b$;

\item $G=McL$. Then $G$ is $(nX,nX,11a)$-generated for $nX\neq 2a,3a$, and $(nX,nX,nX,11a)$ for $nX=2a,3a$;

\item $G=He$. Then $G$ is $(nX,nX,17a)$-generated for $nX\neq 2a,2b, 3a$,  while it is $(3a,3a,8a)$-generated and $(nX,nX,nX,17a)$ for $nX=2a,2b$;

\item $G=Ru$. Then $G$ is  $(nX,nX,29a)$-generated for $nX\neq 2a,2b$, and $(nX,nX,nX,29a)$-generated for $nX=2a,2b$;

\item $G=Suz$. Then $G$ is $(nX,nX,13a)$-generated for $nX\neq 2a,2b, 3a$, $(nX,nX,nX,13a)$-generated for $nX= 2a,2b$, and $(3a,3a,3a,3a,13a)$-generated;

\item $G=O'N$. Then $G$ is $(nX,nX,31a)$-generated for $nX\neq 2a$, and $(2a,2a,2a,31a)$-generated;

\item $G=Co_3$. Then $G$ is  $(nX,nX,23a)$-generated for $nX\neq 2a,2b, 3a$, while it is $(3a,3a,15a)$-generated and $(nX,nX,nX,23a)$-generated for $nX=2a,2b$;

\item $G=Co_2$. Then $G$ is  $(nX,nX,23a)$-generated for $nX\neq 2a,2b,2c, 4a$, while it is $(4a,4a,10a)$-generated, $(nX,nX,nX,23a)$-generated for $nX=2b,2c$, and $(2a,2a,2a,2a,23a)$-generated;

\item $G=Fi_{22}$. Then $G$ is $(nX,nX,13a)$-generated for $nX\neq 2a,2b, 2c,3a,3b$, while it is $(nX,nX,nX,13a)$-generated for $nX= 2b,2c,3a, 3b$ and $(2a,2a,2a,2a,2a,2a,13a)$-generated;

\item $G=HN$. Then $G$ is $(nX,nX,19a)$-generated for $nX\neq 2a,2b$, and $(nX,nX,nX,19a)$-generated for $nX\neq 2a,2b$;

\item $G=Ly$. Then $G$ is  $(nX,nX,67a)$-generated for $nX\neq 2a, 3a$, and $(nX,nX,nX,67a)$-generated for $nX=2a,3a$;

\item $G=Th$. Then $G$ is $(nX,nX,31a)$-generated for $nX\neq 2a$ and $(2a,2a,2a,31a)$-generated;

\item $G=Fi_{23}$. Then $G$ is $(nX,nX,23a)$-generated for $nX\neq 2a,2b, 2c,3a,3b$, $(nX,nX,nX,23a)$-generated for $nX= 2b,2c,3a, 3b$, and $(2a,2a,2a,2a,2a,2a,23a)$-generated;

\item $G=Co_1$. Then $G$ is  $(nX,nX,23a)$-generated for $nX\neq 2a,2b, 2c,3a,3b,4a$, while it is $(3b,3b,26a)$-generated, $(4a,4a,16b)$-generated,  $(nX,nX,nX,23a)$-generated for $nX=2b,2c$,  $(2a,2a,2a,13a)$-generated, and $(3a,3a,3a,10e)$-generated.

\item $G=J_4$. Then $G$ is $(nX,nX,43a)$-generated for $nX\neq 2a,2b$, and $(nX,nX,nX,43a)$-generated for $nX= 2a,2b$.

\item $G=Fi_{24}'$. Then $G$ is $(nX,nX,29a)$-generated for $nX\neq 2a,2b, 3a,3b$ and $(nX,nX,nX,29a)$-generated for $nX= 2a,2b,3a, 3b$.

\item $G=B$. Then $G$ is $(nX,nX,47a)$-generated for $nX\neq 2a,2b, 2c,2d$, $(nX,nX,nX,47a)$-generated for $nX= 2b,2c,2d$, and  $(2a,2a,2a,2a,47a)$-generated.

\end{itemize}

\subsection{}

Let us denote by $\alpha_G(nX)$ the minimum number of elements from a given non-trivial class $nX$ required to generate $G$. Applying Lemma \ref{center}, we obtain the following estimates:
 \begin{itemize}
 \item  $\alpha_G(nX)\geq 3$ for $(G,nX)\in \{(J_2,3a)$, $(McL, 3a)$, $(Suz, 3a)$, $(Fi_{22}, 3a)$, $(Ly,3a)$, $(Fi_{23}, 3a)$, $(Fi_{23}, 3b)$, $(Co_1,3a)$, $(Fi'_{24},3a)$, $(Fi'_{24},3b)\}$;
 \item $\alpha_G(2a)\geq 5$ for $G=Fi_{22}, Fi_{23}$.  
\end{itemize}

This means that for these groups and classes the size of a generating set from the class $nX$ given above in $\mathbf{3.1}$ is the best possible, except possibly for the cases $(Suz, 3a)$ and $(Fi_{22}, 3b)$.
 \medskip

We can also obtain the exact value of $\alpha_G(nX)$ for $(G,nX)\in \{(HS,4a)$,$(J_2,2a)\}$. Namely, using MAGMA and the representations available in \cite{AOL}, we get the following:
\begin{itemize}
\item for $G=HS$, $\alpha_G(4a)=3$ (here we have looked at an irreducible representation of $G$ of degree $20$ over $\F_2$);
\item for $G=J_2$, $\alpha_G(2a)=4$ (here we have looked at an irreducible representation of $G$ of degree $6$ over $\F_2$).
\end{itemize}

\subsection{}

Finally, let us consider the Monster group $M$. This group requires a slightly different approach, since not all the information we need is available in GAP (in fact, the maximal subgroups of this group are not yet completely known).

%\smallskip
%Let $G=B$. 

%First of all, we observe (e.g., cfr. \cite{W})  that there are no maximal subgroups of $G$ containing both elements of order $31$ and $47$. Computing the structure constants, we obtain the following:

%\begin{enumerate}
%\item $\Delta_G(nX,nX,31a)> 0$ and $\Delta_G(nX,nX,47a)> 0$ for all the classes $nX\neq 2a,2b,2c,2d$;
%\item $\Delta_G(nX,nX,nX,31a)> 0$ and  $\Delta_G(nX,nX,nX,47a)> 0$ for the classes $nX=2b,2c,2d$;
%\item $\Delta_G(2a,2a,2a,2a,31a)> 0$ and  $\Delta_G(2a,2a,2a,2a,47a)> 0$.
%\end{enumerate}

%It follows that $G$ can be generated by $3$ conjugates from each of the classes $nX\neq 2a$, $2b$, $2c$, $2d$, by $5$ conjugates from each of the classes $nX=2b$, $2c$, $2d$, and by  $7$ conjugates from the class $2a$.  However, in \cite{Ward}, using a method due to Mazurov Ward showed that $3$ suitable conjugates from the class $2c$ can generate $G$. It is easy to see that the same happens for the class $2d$. 
%Furthermore, for the class $2a$ a better bound was obtained by Zisser in \cite{Zis}, namely: $\alpha_B(2a)\leq 5$.

%\smallskip
So, let $G=M$. Observe (e.g., see \cite{W}) that there are no maximal subgroups of $G$ containing both elements of order $59$ and $71$. Computing the structure constants, we obtain the following:

\begin{enumerate}
\item $\Delta_G(nX,nX,59a)> 0$ and $\Delta_G(nX,nX,71a)> 0$ for all the classes $nX\neq 2a,2b$;
\item $\Delta_G(nX,nX,nX,59a)> 0$ and  $\Delta_G(nX,nX,nX,71a)> 0$ for the classes $nX=2a,2b$.
\end{enumerate}

%that  $\Delta_G(dX,dX,31a)> 0$ and $\Delta_G(dX,dX,47a)> 0$ for all the classes $dX$.

%It follows that $G$ can be generated by at most $3$ conjugates from each class $dX$. 

%\subsection{$G=M$}

%First of all, we observe (e.g., cfr. \cite{W}) that there are no maximal subgroups of $G$ containing both elements of order $59$ and $71$. Computing the structure constants, we obtain that $\Delta_G(dX,dX,59a)> 0$ and $\Delta_G(dX,dX,71a)> 0$ for all the classes $dX$.

It follows that $G$ can be generated by $3$ conjugates from each class $nX\neq 2a$, $2b$, and  by $5$ conjugates from each of the classes $nX=2a$, $2b$. However,  in \cite{Ward}, it is shown that $3$ suitable conjugates from the class $2b$ can generate $G$. Furthermore, for the class $2a$, a better bound was obtained by Zisser in \cite{Zis}, namely: $\alpha_M(2a)\leq 4$.

\medskip

The results obtained above may be summarized in the following:

\begin{theor}\label{gen}

Let $G$ be a finite sporadic simple group, and let $g$ be a non-identity element of $G$. Denote by $\alpha_G(g)$ the minimum number of conjugates of  $g$ required to generate $G$. Then the following holds:

\begin{enumerate}
\item If $G\neq M$ and $g \in G$ is not an involution, then $\alpha_G(g)=2$  unless:

\begin{itemize}
 \item $(G,g)\in \{(J_2,3a), (HS, 4a), (McL,3a),  (Ly,3a), (Co_1,3a),
 (Fi_{22}, 3a)$, $(Fi_{23}, 3a)$, $(Fi_{23},3b)$, $(Fi_{24}', 3a), (Fi_{24}', 3b) \}$. In these cases $\alpha_G(g) = 3$;
\item $(G,g)= (Fi_{22}, 3b)$, in which case $2\leq \alpha_G(g) \leq 3$;

\item $(G,g)=(Suz,3a)$, in which case $3\leq \alpha_G(g) \leq 4$;

\end{itemize}

\item If $G\neq M$ and $g \in G$ is an involution, then $\alpha_G(g)=3$ unless:
\begin{itemize}
 \item $(G,g)\in \{(J_2, 2a), (Co_2, 2a), (B,2a)\}$, in which case $\alpha_G(g)=4$;
 \item $(G,g)\in \{(Fi_{22}, 2a), (Fi_{23}, 2a)\}$. In these cases $5\leq \alpha_G(g)\leq 6$;
\end{itemize}

\item If $G=M$ and $g \in G$ is not an involution, then $2\leq \alpha_G(g)\leq 3$;
\item If $G=M$ and $g \in G$ is an involution, then $3\leq \alpha_G(g)\leq 4$.

\end{enumerate}

\end{theor}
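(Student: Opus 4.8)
The plan is to prove Theorem \ref{gen} by assembling the case-by-case results already recorded in Sections $\mathbf{3.1}$, $\mathbf{3.2}$ and $\mathbf{3.3}$, and reorganizing them according to whether $g$ is an involution, whether $G$ is the Monster, and whether $(G,g)$ falls into one of the exceptional families. The core observation is that Theorem \ref{gen} is essentially a \emph{summary} statement: every numerical fact it asserts has, in principle, already been established; what remains is to (a) extract a tight upper bound $\alpha_G(g)\le m$ from the $(C_1,\dots,C_k)$-generation results, (b) extract the matching lower bound $\alpha_G(g)\ge m$ (either trivially, or from Lemma \ref{center}, or from a direct MAGMA/GAP computation), and (c) verify that the two bounds coincide except in the handful of cases where the theorem explicitly allows a gap.

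First I would dispose of the upper bounds. For each sporadic $G\ne M$ and each non-trivial class $nX$, Section $\mathbf{3.1}$ exhibits a class $C_k$ (almost always the class of elements of order the largest prime divisor of $|G|$) and an integer $k$ such that $\Theta_G(nX,\dots,nX,C_k)>0$, whence by the discussion following Lemma \ref{center} the group is $(nX,\dots,nX,C_k)$-generated and therefore $\alpha_G(nX)\le k-1$. Scanning the list, $k-1=2$ for every non-involution class outside the listed exceptions; $k-1=3$ for the ten pairs $(J_2,3a),(McL,3a),\dots,(Fi_{24}',3b)$ and for $(Fi_{22},3b)$ and for involutions in the generic case; $k-1=4$ for involutions in $J_2,Co_2,B$ and for $(Suz,3a)$; and $k-1=6$ for the involution classes $2a$ in $Fi_{22}$ and $Fi_{23}$. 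For $G=M$ one invokes Section $\mathbf{3.3}$: the structure-constant computations give $\Delta_G(nX,nX,59a)>0$ for $nX\ne 2a,2b$ (hence $\alpha_M(nX)\le 2$) and $\Delta_G(nX,nX,nX,59a)>0$ for $nX=2a,2b$, refined to $\alpha_M(2b)\le 3$ by \cite{Ward} and $\alpha_M(2a)\le 4$ by \cite{Zis}; since $M$ has non-involution classes of every order dividing $|M|$ with prime-order elements, the bound $\alpha_M(g)\le 3$ for non-involutions and $\le 4$ for involutions follows.

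Next the lower bounds. The trivial bound $\alpha_G(g)\ge 2$ holds for every $g\ne 1$ in a non-cyclic (hence non-abelian) simple group, and $\alpha_G(g)\ge 3$ holds whenever $\langle g\rangle$ together with one conjugate cannot generate $G$ — but rather than argue this abstractly one uses Lemma \ref{center}: if $\Delta_G(C_1,\dots,C_k)<|C_G(g_k)|$ for the relevant choice of classes, then $G$ is not $(C_1,\dots,C_k)$-generated, so $\alpha_G(g)\ge k$. Section $\mathbf{3.2}$ records exactly the outcomes of these centralizer comparisons: $\alpha_G(3X)\ge 3$ for the ten (well, for $(J_2,3a)$ and nine $Fi$/$McL$/$Ly$/$Co_1$ cases, with $(Suz,3a)$ and $(Fi_{22},3b)$ only reaching $\ge 3$ conjecturally via the same mechanism but not confirmed), and $\alpha_G(2a)\ge 5$ for $G=Fi_{22},Fi_{23}$. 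The two cases $(HS,4a)$ and $(J_2,2a)$ get their lower bounds not from Lemma \ref{center} but from an explicit MAGMA search in the degree-$20$ $\FF_2$-module of $HS$ and the degree-$6$ $\FF_2$-module of $J_2$, yielding $\alpha(4a)=3$ and $\alpha(2a)=4$ exactly; similarly $\alpha_{Co_2}(2a)=4$ and $\alpha_B(2a)=4$ come from direct computation (the upper bound $\le 4$ from $\mathbf{3.1}$ together with a verified failure at $3$). For the Monster, $\alpha_M(g)\ge 2$ is automatic and $\alpha_M(g)\ge 3$ for involutions follows because no pair of involutions generates $M$ (one may cite \cite{Ward} or a short structure-constant argument), leaving the stated ranges $2\le\alpha_M(g)\le 3$ and $3\le\alpha_M(g)\le 4$.

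Finally I would match upper and lower bounds case by case to read off the four-part statement, taking care with the genuinely open cases: for $(Fi_{22},3b)$ the upper bound is $3$ and the lower bound is only the trivial $2$, giving $2\le\alpha\le 3$; for $(Suz,3a)$ Lemma \ref{center} does give $\ge 3$ but the best generating set found has size $4$, giving $3\le\alpha\le 4$; for $(Fi_{22},2a)$ and $(Fi_{23},2a)$ the bounds are $5$ and $6$, matching Zisser's covering-number result; and for $M$ the bounds are as above. The main obstacle is not conceptual but bookkeeping: one must be scrupulous that for every one of the $26$ groups and every class type the cited $\Theta_G>0$ (or $\Delta_G>0$, for $M$) computation really was carried out for a class $C_k$ that \emph{meets} the relevant maximal subgroups — in particular that the sum over maximal-subgroup representatives in the definition of $\Theta_G$ was complete (this is why $M$, whose maximal subgroups are not fully known, is handled separately via raw structure constants and the external inputs \cite{Ward},\cite{Zis}), and that the handful of direct computations in $\mathbf{3.2}$ and $\mathbf{3.3}$ are correctly transcribed. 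Since all of this is computational and the numbers are displayed in $\mathbf{3.1}$–$\mathbf{3.3}$, the proof of Theorem \ref{gen} is a matter of collating those lines under the four headings.
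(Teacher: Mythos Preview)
Your overall plan—treating Theorem \ref{gen} as a collation of the computational results in Sections 3.1–3.3—matches the paper's approach exactly. However, your handling of the Monster contains a genuine error. You write that $\Delta_G(nX,nX,59a)>0$ for $nX\ne 2a,2b$ and conclude ``hence $\alpha_M(nX)\le 2$'', but $\Delta_G>0$ says only that some pair of conjugates from $nX$ has product of order $59$; it does \emph{not} say the pair generates $M$. For the other sporadic groups the gap between $\Delta_G$ and $\Delta_G^*$ is bridged by computing $\Theta_G$, which requires summing over all conjugacy classes of maximal subgroups—and this is precisely what cannot be done for $M$, whose maximal subgroups are not completely known. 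The paper's actual argument hinges on computing the structure constants against \emph{both} $59a$ and $71a$, combined with the fact (cited from \cite{W}) that no maximal subgroup of $M$ contains elements of order $59$ and of order $71$ simultaneously: given $g\in nX$, one finds conjugates $g',g''$ with $gg'$ of order $59$ and $gg''$ of order $71$, so that $\langle g,g',g''\rangle$ lies in no maximal subgroup and must equal $M$. This yields $\alpha_M(g)\le 3$ (not $\le 2$) for non-involutions, and analogously $\le 5$ for involutions before the refinements from \cite{Ward} and \cite{Zis} are applied. Your follow-up sentence deriving ``$\alpha_M(g)\le 3$'' from the remark that ``$M$ has non-involution classes of every order dividing $|M|$'' is not a valid deduction and does not repair the gap.

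A smaller point: you state that the lower bounds $\alpha_{Co_2}(2a)\ge 4$ and $\alpha_B(2a)\ge 4$ come from ``direct computation \dots\ a verified failure at $3$'', but Section 3.2 records direct MAGMA searches only for $(HS,4a)$ and $(J_2,2a)$; you should not attribute to the paper a mechanism it does not invoke.
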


\section{The covering groups: generation by conjugates}\label{covgen}

The covering groups of the simple sporadic groups can be dealt with using the same machinery exploited above. Likewise, the notation (notably for conjugacy classes) is the one fixed in the Introduction, following \cite{AOL} and \cite{GAP}.  

For the reader's sake, the following elementary observation seems to be in order.

Suppose that $G$ is a covering group of the simple group $H$ (that is, $G$ is quasi-simple with $G/Z(G)=H$). Then, if $S$ is any generating set for $H$, its preimage in $G$ via the natural map is clearly a generating set for $G$. Obviously, if $S$ consists of elements of the same order, then its preimage in $G$ consists of elements of the same order modulo $Z(G)$.

In view of this, by taking preimages we can transfer the information obtained in the previous section on the generation by conjugates of the sporadic simple groups to their covering groups. The overall results are summarized in the following:

\begin{theor}\label{gencov}
Let $G$ be a covering group of a finite simple sporadic group and let $g$ be a non-central element of $G$. Denote by $\alpha_G(g)$ the minimal number of conjugates of $g$ required to generate $G$, and by $\bar g$ the image of $g$ in $\bar G=G/Z(G)$.
Then the following holds:

\begin{enumerate}

\item If $\bar g$ is not an involution, then $\alpha_G(g)=2$,  unless:

\begin{itemize}

\item $(G, g)\in \{(2^.J_2,3a),(2^. J_2, 6a)$, $ (2^.HS, 4b),
(2^.HS, 4c), (3^. McL,3c)$,
$ (3^.McL,3d)$,  $(3^.McL,3e)$,
 $(2^.Fi_{22}, 3a)$, $(2^. Fi_{22}, 6a)$,
 $(3^. Fi_{22}, 3c)$,$(3^. Fi_{22}, 3d)$,
$(3^. Fi_{22}, 3e)$, $(6^.Fi_{22}, 3c)$,
$(6^. Fi_{22}, 3d)$, $(6^. Fi_{22}, 3e)$,
$(6^. Fi_{22}, 6m)$, $(6^. Fi_{22}, 6n)$,
$(6^.Fi_{22}, 6o)$,  $(2^.Co_1,3a)$,
 $(2^.Co_1,6a)$, $(3^.Fi_{24}', 3c)$,
 $(3^. Fi_{24}', 3d) \}$, $(3^.Fi_{24}', 3e) \}$, $(3^. Fi_{24}', 3f) \}$.
In these cases $\alpha_G(g) = 3$;

\item $(G, g)\in \{
(2^.Fi_{22}, 3b), (2^. Fi_{22}, 6b)$,
$(3^.Fi_{22}, 3f), (6^.Fi_{22}, 3f)$,
$(6^. Fi_{22}, 6p)\}$. In these cases $2\leq \alpha_G(g) \leq 3$;

\item $(G, g)\in \{
(2^. Suz,3a),
(2^. Suz,6a),
(3^. Suz,3c),
(3^. Suz,3d),
 (3^. Suz,3e),
(6^. Suz,3c)$, $(6^. Suz,3d),
(6^. Suz,3e),
(6^. Suz,6g),
(6^. Suz,6h),
(6^. Suz,6i)\}$. In these cases $3\leq \alpha_G(g) \leq 4$;

\end{itemize}

\item  If $\bar g$ is an involution, then $\alpha_G(g)=3$, except for the following cases:
\begin{itemize}
 \item If $(G, g)\in \{(2^. J_2, 2b), (2^. J_2, 2c),(2^. B, 2b)\} $, then $\alpha_G(g)=4$;

 \item
   If $(G, g)\in \{
(2^. Fi_{22}, 2b),
(2^. Fi_{22}, 2c),
(3^. Fi_{22}, 2a),
(3^. Fi_{22}, 6a),
(3^. Fi_{22}, 6b)$,
$(6^. Fi_{22}, 2b)$,
$(6^. Fi_{22}, 2c),
(6^. Fi_{22}, 6c),
(6^. Fi_{22}, 6d),
(6^. Fi_{22}, 6e),
(6^. Fi_{22}, 6f)\} $, then $5\leq \alpha_G(g)\leq 6$.
\end{itemize}

%\item 
   %If $\bar g$ is not an involution and $G=2^\cdot B$, then $2\leq \alpha_G(g)\leq 3$;

%\item 
%   If $\bar g$ is an involution and $G=2^\cdot B$, then $3\leq \alpha_G(g)\leq 5$.

\end{enumerate}

\end{theor}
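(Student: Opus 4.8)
The plan is to derive Theorem \ref{gencov} as an essentially formal consequence of Theorem \ref{gen}, using the lifting observation already recorded in this section: if $S$ generates $H=G/Z(G)$, then the preimage of $S$ in $G$ generates $G$, and if the elements of $S$ all have the same order, the preimage elements all have the same order modulo $Z(G)$. Concretely, for each covering group $G$ and each non-central class of $G$ with image $\bar g \in \bar G$, I would argue $\alpha_G(g) = \alpha_{\bar G}(\bar g)$ in almost all cases, reading off the right-hand value from Theorem \ref{gen}. The inequality $\alpha_G(g) \le \alpha_{\bar G}(\bar g)$ is immediate from the lifting observation applied to a minimal generating set of conjugates of $\bar g$ in $\bar G$: lift such a set to conjugates of $g$ (after adjusting each lift by a central element, which does not change conjugacy in $G$ or the order modulo $Z(G)$); their images generate $\bar G$, hence the lifts together with $Z(G)$ generate $G$, but since $G$ is quasi-simple and perfect, $Z(G) \le [G,G] = G$ is automatic and in fact the subgroup they generate surjects onto $\bar G$ and must therefore be all of $G$ (a proper subgroup of a quasi-simple $G$ surjecting onto $\bar G$ would contain $Z(G)$ and hence equal $G$). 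The reverse inequality $\alpha_G(g) \ge \alpha_{\bar G}(\bar g)$ is trivial, because the images in $\bar G$ of any generating set of conjugates of $g$ form a generating set of conjugates of $\bar g$.

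Next I would turn the lower-bound arguments of Section \ref{mach} to the covering groups directly, rather than only inheriting them from $\bar G$, since the centralizer orders grow. The key tools are Lemma \ref{center} and the centralizer-order criterion following it: if $\Delta_G(C_1,\dots,C_k) < |C_G(g_k)|$ then $G$ is not $(C_1,\dots,C_k)$-generated. For the exceptional families — the various classes of elements of order $3$ (and $6$) in $2^.J_2$, $3^.McL$, $2^.Fi_{22}$, $3^.Fi_{22}$, $6^.Fi_{22}$, $2^.Suz$, $3^.Suz$, $6^.Suz$, $2^.Co_1$, $3^.Fi_{24}'$, and for the involution-image classes in $2^.Fi_{22}$, $3^.Fi_{22}$, $6^.Fi_{22}$, $2^.J_2$, $2^.B$ — I would compute the relevant structure constants $\Delta_G$ (or the lower bound $\Theta_G$ from the structure-constants method) over the covering group using the GAP character tables of the covers, together with the maximal-subgroup data, exactly as in Section \ref{secgen}. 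Wherever $\Theta_G > 0$ for a triple $(C,C,C_k)$ we get $\alpha_G(g) \le 3$; wherever $\Delta_G(C,C,C_k) < |C_G(g_k)|$ for all admissible $C_k$ we get $\alpha_G(g) \ge 3$ (and similarly for the $4$ and $6$ bounds), yielding the stated ranges. The cases listed with a genuine gap (e.g. $(2^.Fi_{22},3b)$, $(2^.Suz,3a)$ and their associates) are precisely those where this matching already fails at the level of $\bar G$ in Theorem \ref{gen}, so the interval is inherited.

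One subtlety worth isolating is the behaviour of classes that \emph{fuse or split} on passing between $G$ and $\bar G$: an element $\bar g$ of order $n$ in $\bar G$ may lift to elements of order $n$ or order $n\cdot|Z(G)_{\text{rel}}|$ in $G$, which is exactly why classes like $6a$ in $2^.J_2$ or $6m,6n,6o$ in $6^.Fi_{22}$ appear alongside the order-$3$ classes. I would handle this by noting that $\alpha_G$ depends only on the image class $\bar g$ (both inequalities above are phrased in terms of $\bar g$), so all lifts of a given $\bar g$ have the same $\alpha_G$-value; the multiplicity of class names in the statement simply records which Atlas/GAP class labels of $G$ sit over the relevant class of $\bar G$. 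The main obstacle I anticipate is therefore not conceptual but bookkeeping: correctly identifying, for each of the $\le 12$ covering groups, the full list of non-central classes whose image is an involution versus a non-involution, tracking the class-fusion under $G \to \bar G$, and — for the handful of exceptional families — running the structure-constant and centralizer-order computations on the covers (where, as the text notes, some maximal-subgroup data may be missing from GAP and must be supplied by hand or from \cite{W}). Once that table of lifts and the finitely many structure-constant inequalities are in place, the theorem follows by assembling the per-case conclusions into the four itemized alternatives.
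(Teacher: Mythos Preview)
Your approach is essentially the paper's: once you establish $\alpha_G(g)=\alpha_{\bar G}(\bar g)$ via lifting and projection (which you do correctly), the theorem is just Theorem~\ref{gen} together with the bookkeeping of which $G$-classes lie over each $\bar G$-class. One remark: your second paragraph, proposing to rerun the structure-constant and Lemma~\ref{center} computations on the covers themselves, is unnecessary and in fact cannot work as stated, since Lemma~\ref{center} requires $G$ to be centerless; the lower bounds are inherited from $\bar G$ through the equality $\alpha_G(g)=\alpha_{\bar G}(\bar g)$, not recomputed on $G$. (Also, your parenthetical ``adjusting each lift by a central element, which does not change conjugacy in $G$'' is phrased backwards: multiplying by a central element can change the $G$-class, and that is precisely what you exploit to land the lift in the class of $g$.)
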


\section{Cyclic and almost cyclic elements in the representations of finite sporadic groups}\label{spor}

In this section we determine the occurrence of cyclic and almost cyclic elements in the representations of the finite sporadic simple groups.  For their relevance as well as for technical reasons, we have confined our analysis to the case of elements of prime-power order. The results will be  summarized in Theorem \ref{cyclic} and Theorem \ref{almost}  (Section \ref{repres}).
\smallskip

To simplify the notation, if $\Phi: G\rightarrow GL(n,F)$ is a faithful irreducible representation,  we will identify $G$ with $\Phi(G)$ (and $\Phi(g)$ with $g$). Moreover, when we say below that an element of $G$ is almost cyclic, we mean that the element is almost cyclic but not cyclic, and when we say that an element is not almost cyclic, we mean that it is neither cyclic nor almost cyclic. Finally, we denote by $d$ a prime-power integer, and we always assume $d>2$. Indeed, observe that, in view of Lemma \ref{wag}, no involution of a sporadic simple group can be represented by a cyclic matrix, and hence we will disregard completely generating sets of involutions in our analysis.
%, and by $q$  a prime-power integer greater than $2$.
\smallskip

 In order to apply Lemma \ref{2.1}, we fully exploit the results obtained in Section \ref{secgen} on the generation of $G$ by conjugates. 
%whenever the available information is sufficient we first use the GAP routines in order to analyze  the generation of $G$ by conjugates, via the `structure constants method'. In particular, this will give, for each element $g$ of prime-power order, a set of conjugates of $g$ generating $G$, which is in most cases, though not always, of minimal size. In the latter case, a generating set of minimal size has been obtained by direct computation using either GAP or the MAGMA package (\cite{MGM}), unless the group involved is too large. 
Next, we refer to the paper of Jansen (\cite{J}), giving the minimal degree of the faithful irreducible representations $\Phi$ of $G$, as well as to the work of Hiss and Malle (\cite{HM}) on the low-dimensional representations of quasi-simple groups. By Lemma \ref{2.1}, we must have $\dim \Phi\leq \alpha_G(g)(|g|-1)$. If this bound is not met by any $\Phi$, then we are done: $\Phi(g)$ cannot be neither cyclic nor almost cyclic. Otherwise, the list of representations meeting the bound is usually small: if $\ell$ does not divide the order of $g$, and the relevant Brauer character tables are known, we get the desired answers using GAP; otherwise, we make use of MAGMA,  applying it to the relevant representations as provided by the Atlas on line (\cite{AOL}).
\medskip

The results obtained are as follows:

\subsection {$G$ = $McL$, $He$, $Suz$, $O'N$, $J_4$, $HN$, $Th$, $Fi_{22}$, $Fi_{23}$,  $Fi_{24}', B$}

%\begin{enumerate}
%\item $McL$ is $(dX,dX,11a)$-generated for all the classes $dX\neq3a$, and $(3a,3a,3a,11a)$-generated (three being the minimal size of a generating set from the class 3a);
%\item $He$ is $(dX,dX,17a)$-generated for all the classes $dX\neq3a$, and $(3a,3a,8a)$-generated;
%\item $Suz$ is $(dX,dX,13a)$-generated for all the classes $dX \neq3a$, and $(3a,3a,3a,3a,13a)$-generated (four being the minimal size of a generating set from the class 3a);
%\item $O'N$ is $(dX,dX,31a)$-generated for all the classes $dX$;
%\item $J_4$ is $(dX,dX,43a)$-generated for all the classes $dX$;
%\item $HN$ is $(dX,dX,19a)$-generated for all the classes $dX$;
%\item $Th$ is $(dX,dX,31a)$-generated for all the classes $dX$;
%\item $Fi_{22}$ is $(dX,dX,13a)$-generated for all the classes $dX\neq3a,3b$, and $(dX,dX,dX,13a)$-generated for the classes $dX=3a,3b$;
%\item $Fi_{23}$ is $(dX,dX,23a)$-generated for all the classes $dX\neq3a,3b$, and $(dX,dX,dX,23a)$-generated for the classes $dX=3a,3b$;
%\item $Fi_{24}'$ is $(dX,dX,29a)$-generated for all the classes $dX\neq3a,3b$, and $(dX,dX,dX,29a)$-generated for the classes $dX=3a,3b$.
%\end{enumerate}
%For each of the groups  (8), (9), (10) three is the minimal size of a generating set from the classes $3a$, $3b$.

\medskip
By Theorem \ref{gen}, it follows from Lemma \ref{2.1} and  \cite{J} that for all the listed groups $G$ no element $g$ of prime-power order $d$ can be almost cyclic.

\subsection{$G =M_{11}$}

%Evaluating $\Delta^*(G)$, we observe that $G$ is $(dX,dX,11a)$-generated for all the classes $dX\neq 3a$. $G$ is not, in fact, $(3a,3a,11a)$-generated; however, it is $(3a,3a, 8a)$-generated. 
In view of \cite{J} and Lemma \ref{2.1}, we are left to examine the classes $4a$ and $5a$  only for $\ell= 3$, and the classes $8a$, $8b$, $11a$ and $11b$ for every  $\ell$. Since the Brauer character tables are known for any characteristic, using the GAP routines we can answer completely the case when $\ell$ does not divide $d$. We obtain that cyclic or almost cyclic elements occur exactly as listed in the following table:

\begin{center}
\begin{tabular}{c|c|c|c}
$\ell$  & $\dim \Phi$ & $dX$ & type \\ \hline\hline
$\ell \nmid |G|$  & $10$ & $11a,b$  & cyclic\\
{} & $11$ & $11a,b$ & cyclic \\ \hline
$2$ & $10$ & $11a,b$ & cyclic \\ \hline
$3$ & $5$ & $4a$ & almost cyclic \\
   &   $5$  &  $5a$;  $8a,b$; $11a,b$ & cyclic \\
  &  $10$ & $11a,b$ & cyclic  \\ \hline 
$5$ & $10$ & $11a,b$ & cyclic \\
   &  $11$  &  $11a,b$ & cyclic \\ \hline
$11$ & $9$ & $8a,b$ & almost cyclic \\
\end{tabular} 
\end{center}

So, we are left to examine the classes $8a,8b$ when $\ell=2$ and the classes $11a,11b$ when $\ell=11$. Note that $(8a)^5 \in 8b$ and $(11a)^2\in 11b$, and therefore for our purposes it is irrelevant whether an element $g$ of order $8$ (resp. $11$) belongs the class $8a$ or $8b$ (resp. $11a$ or $11b$). Denoting by $a$ and $b$ the 'standard generators' of $M_{11}$, of order respectively $2$ and $4$, given in \cite{AOL}, the following holds:

(i) If $\ell=2$, by \cite{HM} the bound given by  Lemma \ref{2.1} is only met by a representation $\Phi$ of degree $10$. Pick $g=bab^2 (ab)^3 ba$. Then $g$ has order $8$, and its invariant factors (computed using MAGMA) are $\{(x-1)^2$, $(x-1)^8\}$. So $g$ is not almost cyclic.

(ii) If $\ell=11$, by \cite{HM} $\Phi$ must have degree $9$, $10$, $11$ or $16$. Pick $g=ab$. Then $g$ has order $11$. If $\dim \Phi=9$ or $10$, by Proposition \ref{Zal} $g$  is cyclic. If $\dim \Phi=11$, $g$ is cyclic by Corollary \ref{l+1}. Finally, if $\dim \Phi=16$, the invariant factors of $g$ are $\{(x-1)^5$, $(x-1)^{11}\}$. So $g$ is not almost cyclic.

\subsection{$G=M_{12}$}
%Evaluating $\Delta^*(G)$, we obtain that $G$ is $(dX,dX,11a)$-generated for all the classes $dX$.
% except possibly for $dX= 3a$. On the other hand, a computation using MAGMA shows that $G$ is in fact $(3a,3a,11a)$-generated.

In view of \cite{J} and Lemma \ref{2.1}, we are left to examine the classes $8a$, $8b$, $11a$ and $11b$, for every $\ell$. Since the Brauer character tables are known for any characteristic, using the GAP routines we obtain that, whenever $\ell$ does not divide $d$, cyclic or almost cyclic elements occur exactly according to the following table:

\begin{center}
\begin{tabular}{c|c|c|c}
$\ell$  & $\dim \Phi$ & $dX$ & type \\ \hline\hline
$\ell\nmid |G|$ & $11$ & $11a,b$ & cyclic \\ \hline
$2$ & $10$ & $11a,b$ & cyclic \\ \hline
$3$ & $10$ & $11a,b$ & cyclic \\ \hline
$5$ & $11$ & $11a,b$ & cyclic\\ 
\end{tabular} 
\end{center}

So, we are left to examine the classes $8a,8b$ when $\ell=2$  and the classes $11a, 11b$ when $\ell=11$. Note that $(11a)^2\in 11b$, and therefore for our purposes it is irrelevant whether an element $g$ of order $11$ belongs the class $11a$ or $11b$. Denoting by $a$ and $b$ the 'standard generators' of $M_{12}$, of order respectively $2$ and $3$, given in \cite{AOL}, the following holds:

(i) If $\ell=2$, by \cite{HM} $\Phi$ must have degree $10$. Constructing this representation using MAGMA, we obtain that, for $g$ in both classes $8a$ and $8b$, the invariant factors are $\{(x-1)^2$, $(x-1) ^8\}$. Thus $g$ is not almost cyclic.

(ii) If $\ell=11$, by \cite{HM} $\Phi$ must have either degree $11$ (there are two such representations) or degree $16$.  Pick $g=ab$. Then $g$ has order $11$. If $\dim \Phi=11$, then $g$ is cyclic by Corollary \ref{l+1}. If $\dim \Phi=16$, the invariant factors of $g$ are $\{(x-1)^5$, $(x-1)^{11}\}$. So $g$ is not almost cyclic.

\subsection{$G=J_1$}
%$G$ is $(dX,dX,19a)$-generated for all the classes $dX$.
In view of \cite{J} and Lemma \ref{2.1}, we only need to examine the classes $11a$, $19a$, $19b$ and $19c$ when $\ell=2$, the classes $19a$, $19b$ and $19c$ when $\ell=7,19$, and the classes $5a$, $5b$, $7a$, $11a$, $19a$, $19b$ and $19c$ when $\ell=11$.
The Brauer character tables being known for any characteristic, using the GAP routines  we obtain that, whenever $\ell$ does not divide $d$, cyclic or almost cyclic elements occur exactly according to the following table:

\begin{center}
\begin{tabular}{c|c|c|c}
$\ell$  & $\dim \Phi$ & $dX$ & type \\ \hline\hline
$2$ & $20$ & $19a,b,c$ & almost cyclic \\ \hline
$11$ & $7$ & $7a$; $19a,b,c$ & cyclic\\ 
{} & $14$ & $19a,b,c$ & almost cyclic
\end{tabular} 
\end{center}

So, we are left to examine the class $11a$ for $\ell=11$ and the class $19a$ for $\ell=19$ (note that $(19a)^2\in 19b$ and $(19b)^2\in 19c$).
Denoting by $a$ and $b$ the 'standard generators' of $M_{12}$, of order respectively $2$ and $3$, given in \cite{AOL}, the following holds:

(i) If $\ell=11$, by \cite{HM} $\Phi$ must have either degree $7$ or degree $14$. Let us pick $g=baba b^2 abab^2 ab^2 (ab)^3 ab^2 ab$. Then $g$ has order $11$. If $\dim \Phi=7$, then $g$ is cyclic by Proposition \ref{Zal}. If $\dim \Phi=14$, the invariant factors of $g$ are $\{(x-1)^3$, $(x-1)^{11}\}$, and hence $g$ is not almost cyclic.

(ii) If $\ell=19$, by \cite{HM} $\Phi$ must have either degree $22$ or degree $34$.  Pick $g=abab^2$. Then $g$ has order $19$. If $\dim \Phi=22$,  the invariant factors of $g$ are $\{(x-1)^3$, $(x-1)^{19}\}$.  If $\dim \Phi=34$, the invariant factors of $g$ are $\{(x-1)^{15}$, $(x-1)^{19}\}$.  Thus, in both cases, $g$ is not almost cyclic.

\subsection{$G=M_{22}$} 
%$G$ is $(dX,dX,11a)$-generated for all the classes $dX$. 
In view of \cite{J} and Lemma \ref{2.1}, we only need to examine the classes $7a$, $7b$, $8a$, $11a$ and $11b$ when $\ell=2$, and the classes $11a$ and $11b$ when $\ell=11$.  Also note that $(11a)^2\in 11b$.

Denoting by $a$ and $b$ the 'standard generators' of $M_{12}$, of order respectively $2$ and $4$, given in \cite{AOL}, the following holds:

(i) If $\ell=2$, by \cite{HM} $\Phi$ must have degree $10$ (there are two such representations). By inspection of the Brauer character tables, we see that if $g$ has order $11$, then $g$ is cyclic, whereas if $g$ has order $7$, then $g$ is not almost cyclic.
Next, pick $g= bab^2abab^2ab^2ab^2aba$. Then $g$ has order $8$ and, using MAGMA, we get that the invariant factors of $g$ are $\{(x-1)^2$, $(x-1)^8\}$. Thus $g$ is not almost cyclic.

(ii) If $\ell=11$, by \cite{HM} $\Phi$ must have degree $20$. Let $g=ab$. Then $g$ has order $11$. In this case, the invariant factors of $g$ are $\{(x-1)^9$, $(x-1)^{11}\}$. Thus $g$ is not almost cyclic.

\subsection{$G=J_2$}
%Evaluating $\Delta^*(G)$, we get that $G$ is $(dX,dX,7a)$-generated for all the classes $dX\neq3a,4a$, and  $(dX,dX,dX,7a)$-generated for the classes $dX=3a,4a$. The minimal size of a generating set from the class 3a is actually  three; however, MAGMA shows that $G$ is $(4a,4a,7a)$-generated. 
In view of \cite{J} and Lemma \ref{2.1}, we only need to examine the class $8a$ when $\ell\neq 2$ and the classes $3a$, $4a$, $5a$, $5b$, $5c$, $5d$, $7a$ and $8a$ when $\ell=2$. 

Denoting by $a$ and $b$ the 'standard generators' of $M_{12}$, of order respectively $2$ and $3$, given in \cite{AOL}, the following holds:

(i) By inspection of the Brauer character tables, whenever $\ell$ does not divide $d$, there is only one instance in which a cyclic or almost cyclic element can occur, namely the following:

\begin{center}
\begin{tabular}{c|c|c|c}
$\ell$  & $\dim \Phi$ & $dX$ & type \\ \hline\hline
$2$ & $6$ & $7a$ & cyclic \\
\end{tabular} 
\end{center}

(ii) If $\ell=2$, by \cite{HM} $\Phi$ must have either degree $6$ or degree $14$.  Moreover, $g_4=(ab^2ab)^3$ has order $4$ and $g_8= (abab^2) ^2 (ab)^3bab^2ab$ has order $8$.

Let $\dim \Phi=6$ (there are two such representations). Then the invariant factors of $g_4$ are $\{(x-1)^3$, $(x-1)^3\}$, and hence $g_4$ is not almost cyclic. On the other hand, the minimal and characteristic polynomial of $g_8$ coincide. So $g_8$ is cyclic.

Let $\dim \Phi=14$ (there are two such representations). In view of Lemma \ref{2.1}, we only need to deal with $g_8$. As the invariant factors of $g_8$ are $\{(x-1)^6$, $(x-1)^8\}$, $g_8$ is not almost cyclic.

\subsection{$G=M_{23}$}
%$M_{23}$ is $(dX,dX,23a)$-generated for all the classes $dX$.
In view of \cite{J} and Lemma \ref{2.1}, we only need to examine the classes $23a$ and $23b$ when $\ell\neq 2$, and the classes $7a$, $7b$, $8a$, $11a$, $11b$, $23a$ and $23b$, when $\ell=2$. Since the Brauer character tables are known for any characteristic, using the GAP routines we obtain that, whenever $\ell$ does not divide $d$, cyclic or almost cyclic elements occur exactly according to the following table:

\begin{center}
\begin{tabular}{c|c|c|c}
$\ell$  & $\dim \Phi$ & $dX$ & type \\ \hline\hline
$\ell \nmid |G|$ & $22$ & $23a,b$ & cyclic \\ \hline
$2$ & $11$ & $11a,b$; $23a,b$ &  cyclic \\ \hline
$3$ & $22$ & $23a,b$ & cyclic \\ \hline
$5$ & $22$ & $23a,b$ & cyclic \\ \hline
$7$ & $22$ & $23a,b$ & cyclic \\ \hline
$11$ & $22$ & $23a,b$ & cyclic \\
\end{tabular} 
\end{center}

Let us denote by $a$ and $b$ the 'standard generators' of $M_{23}$, of order respectively $2$ and $4$, given in \cite{AOL}.

Let $\ell=2$. By \cite{HM} $\Phi$ must have degree $11$. Pick $g= (ab)^4 b (ab)^2 bab^2$. Then $g$ has order $8$, and the invariant factors of $g$ are $\{(x-1)^3$, $(x-1)^8\}$. So $g$ is not almost cyclic.

Let $\ell=23$. By \cite{HM} $\Phi$ must have  degree $21$. In this case, an element of order $23$ is cyclic by Proposition \ref{Zal}.

\subsection{$G=HS$} 
%$G$ is $(dX,dX,11a)$-generated for all the classes $dX\neq4a$, whereas it is $(4a,4a,4a,11a)$-generated (three being the minimal size of a generating set from the class $4a$).
Arguing as above, we have only to examine the classes $11a$ and $11b$ when $\ell=2$. These elements turn out not to be almost cyclic.

\subsection{$G=J_3$}
%$G$ is $(dX,dX,19a)$-generated for all the classes $dX$. 
The only classes to be examined are $17a$, $17b$, $19a$ and $19b$ when $\ell=3$. By \cite{HM} $\Phi$ must have degree $18$ (there are two such representations). Using as above the GAP routines, it turns out that the elements of order $17$ are almost cyclic, while the elements of order $19$ are cyclic.

\subsection{$G=M_{24}$}
%$G$ is $(dX,dX,23a)$-generated for all the classes $dX$. 
The only classes to be examined are the classes $23a$ and $23b$  when $\ell\neq 2$, and the classes $7a$, $7b$ $8a$, $11a$, $23a$ and $23b$ when $\ell=2$. Since the Brauer character tables are known for any characteristic, using  the GAP routines we obtain that, whenever $\ell$ does not divide $d$, cyclic or almost cyclic elements occur exactly according to the following table:

\begin{center}
\begin{tabular}{c|c|c|c}
$\ell$  & $\dim \Phi$ & $dX$ & type \\ \hline\hline
$\ell \nmid |G|$ & $23$ & $23a,b$ & cyclic \\ \hline
$2$ & $11$ & $11a$; $23a,b$ & cyclic \\ \hline
$3$ & $22$ & $23a,b$ & cyclic \\ \hline
$5$ & $23$ & $23a,b$ & cyclic \\ \hline
$7$ & $23$ & $23a,b$ & cyclic \\ \hline
$11$ & $23$ &  $23a,b$ & cyclic \\
\end{tabular} 
\end{center}

Let us denote by $a$ and $b$ the 'standard generators' of $M_{24}$, of order respectively $2$ and $4$, given in \cite{AOL}.

If $\ell=2$, by \cite{HM} $\Phi$ must have degree $11$ (there are two such representations). Pick $g=(ba)^2 (b^2a)^2 b$. Then $g$ has order $8$, and the invariant factors of $g$ are $\{(x-1)^3$, $(x-1)^8\}$. So $g$ is not almost cyclic.

If $\ell=23$, by \cite{HM} $\Phi$ must have degree $23$. In this case, the elements of order $23$ are cyclic by Corollary \ref{l+1}.

\subsection{$G=Ru$}
%$G$ is $(dX,dX,29a)$-generated for all the classes $dX$. 
The only classes to be examined are $16a$, $16b$, $29a$ and $29b$, when $\ell=2$. Note that $(16a)^3\in (16b)$.
By \cite{HM}, $\Phi$ must be of degree $28$.  A GAP computation shows that the elements of order $29$ are cyclic. 
Now, denote by $a$ and $b$ the 'standard generators' of $Ru$, of order respectively $2$ and $4$, given in \cite{AOL}. Pick $g= (ba)^2 b^2 a b^2 (ab)^3 bab (ab^2)^5 (a  b)^2 b (ab) ^2$. Then $g$ has order $16$ , and it is not almost cyclic, since its invariant factors are $\{(x-1)^{12}$, $(x-1)^{16}\}$.

\subsection{$G=Co_3$}
%Evaluating $\Delta^*(G)$, we get that $G$ is $(dX,dX,23a)$-generated for all the classes $dX$, unless possibly when $dX= 3a$. However,  $G$ is $(3a,3a,15a)$-generated.  
The only classes to be examined are the classes $23a$ and $23b$, for every $\ell$. Since the Brauer character tables are known for any characteristic, using  the GAP routines we obtain that, whenever $\ell$ does not divide $d$, cyclic or almost cyclic elements occur exactly according to the following table:

\begin{center}
\begin{tabular}{c|c|c|c}
$\ell$  & $\dim \Phi$ & $dX$ & type \\ \hline\hline
$\ell \nmid |G|$ & $23$ & $23a,b$ & cyclic \\ \hline
$2$ & $22$ & $23a,b$ & cyclic \\ \hline
$3$ & $22$ & $23a,b$ & cyclic \\ \hline
$5$ & $23$ & $23a,b$ & cyclic \\ \hline
$7$ & $23$ & $23a,b$ & cyclic \\ \hline
$11$ & $23$ & $23a,b$ & cyclic \\ 
\end{tabular} 
\end{center}
 
If $\ell=23$, by \cite{HM} $\Phi$  must have degree $23$. Hence the elements of order $23$ are cyclic by Corollary \ref{l+1}.

\subsection{$G=Co_2$}
%$G$ is $(dX,dX,23a)$-generated for all the classes $dX \neq 4a$, whereas  it is $(4a,4a,10a)$-generated.

The only classes to be examined are the classes $16a$, $16b$, $23a$ and $23b$, for every $\ell$. Since the Brauer character tables are known for any characteristic, using  the GAP routines we obtain that, whenever $\ell$ does not divide $d$, cyclic or almost cyclic elements occur exactly according to the following table:

\begin{center}
\begin{tabular}{c|c|c|c}
$\ell$  & $\dim \Phi$ & $dX$ & type \\ \hline\hline
$\ell\nmid |G|$ & $23$ & $23a,b$ & cyclic \\ \hline
$2$ & $22$ & $23a,b$ & cyclic \\ \hline
$3$ & $23$ & $23a,b$ & cyclic \\ \hline
$5$ & $23$ & $23a,b$ & cyclic \\ \hline
$7$ & $23$ & $23a,b$ & cyclic \\ \hline
$11$ & $23$ & $23a,b$ & cyclic \\
\end{tabular} 
\end{center}

If $\ell=2$, we need to examine the classes $16a$ and $16b$. By \cite{HM}, $\Phi$ must have degree $22$. Using MAGMA, we can check that, for both classes, the invariant factors are $\{(x-1)^8$, $(x-1)^{14}\}$. So, these elements are not almost cyclic.

If $\ell=23$, by \cite{HM} $\Phi$ must have degree $23$, and the elements of order $23$ are cyclic by Corollary \ref{l+1}.

\subsection{$G=Ly$}
$G$ is $(dX,dX,67a)$-generated for all the classes $dX \neq 3a$, while it is  $(3a,3a,3a,67a)$-generated (three being the minimal size of a generating set from the class $3a$). The only classes to be examined are the classes $67a$, $67b$ and $67c$, when $\ell=5$. Observe that $(67a)^2\in 67b$ and $(67a)^7 \in 67c$, and therefore for our purposes it is irrelevant whether an element $g$ of order $67$ belongs to one or another class. Denote by $a$ and $b$ the 'standard generators' of $Ly$, of order respectively $2$ and $5$, given in \cite{AOL}. According to \cite{HM}, $\Phi$ must have degree $111$, and moreover is unique (unpublished work of Lux and Ryba). Let us pick $g=(ab)^3 b$. Then $g$ has order $67$, but it is not almost-cyclic, since its minimum polynomial is $m_g(x)=x^{67}-1$, whereas its characteristic polynomial is $p_g(x)=(x^{67}-1)(x^{22}+x^{20}-x^{18}+2x^{17}-x^{16}-x^{15}+2x^{14}+x^{12}+x^{10}+2x^8-x^7-x^6+2x^5-x^4+x^2+1)( x^{22}-x^{21} + 3x^{20}+2x^{19}-x^{18}+2x^{15}+2x^{14}-x^{12}+3x^{11}-x^{10}+2x^8+2x^7-x^4+2x^3+3x^2-x+1)$.

\subsection{$G=Co_1$}
%Evaluating $\Delta^*(G)$, we get that $G$ is $(dX,dX,23a)$-generated for all the classes $dX \neq 3a,3b,4a$. Also, $G$ is  $(3b,3b,13a)$-generated, $(4a,4a,10e)$-generated and $(3a,3a,3a,3a,23a)$-generated.

The only classes to be examined are the classes $13a$, $16a$, $16b$, $23a$ and $23b$, when $\ell=2$. Also, note that $(23a)^5\in 23b$. Let us denote by $a$ and $b$ the 'standard generators' of $Co_1$, of order respectively $2$ and $3$, given in \cite{AOL}. According to \cite{HM}, $\Phi$ must have degree $24$. Moreover, such a $\Phi$ is unique (since a proof of this fact is not available in the literature, we have checked it independently. See Appendix).  Pick $g=(ba)^2 b^2 (ab)^2  (b a)^4 b  (b a)^3$. Then $g$ has order $23$ and it is almost cyclic, since it has minimum polynomial $m_g(x)=x^{23}-1$ and characteristic polynomial $p_g(x)=(x-1)(x^{23}-1)$. Next, let $g= (b( abab^2)^2 ab^2a b(ab^2)^5abab^2(ab)^2)^2$. Then $g$ has order $13$, its minimal polynomial is $m_g(x)=\frac{x^{13}-1}{x-1}$ and its characteristic polynomial is $p_g(x)=(m_g(x))^2$. So $g$ is not almost cyclic.\\
Finally, the elements of order $16$ cannot be almost cyclic. Indeed, assume that $g\in G$ of order $16$ is such that $\Phi(g)$ is almost cyclic. Observe that both classes $16a$ and $16b$ have non-trivial intersection with a maximal subgroup $H$ of type $Co_2$. Since the minimal degree of an irreducible representation of $Co_2$ is $22$, $\Phi_{|H}=2 \Psi_1+\Psi_{22}$, where $\Psi_i$ are irreducible representations of $H$ of degree $i$. This means that, considering $g$ as an element of $Co_2$, $\Psi_{22}(g)$ should be almost cyclic. But we have already proved that this cannot happen. \\
%The Baby Monster $B$ and the Monster $M$ require a slightly different approach, since not all the information we need is available in \cite{GAP} (namely, the maximal subgroups of these groups are not yet fully implemented in \cite{GAP}).

%\subsection{$G=B$}
%First of all, we observe (e.g., cfr. \cite{W})  that there are no maximal subgroups of $G$ containing both elements of order $31$ and $47$. Computing the structure constants, we obtain that  $\Delta_G(dX,dX,31a)> 0$ and $\Delta_G(dX,dX,47a)> 0$ for all the classes $dX$.

%It follows that 
%By Theorem \ref{gen}, $G$ can be generated by at most $3$ conjugates from each class $dX$. Since $\dim \Phi\geq 4370$ by  \cite{J}, in all cases by Lemma \ref{2.1} we obtain a contradiction.
\subsection{$G=M$}

%First of all, we observe (e.g., cfr. \cite{W}) that there are no maximal subgroups of $G$ containing both elements of order $59$ and $71$. Computing the structure constants, we obtain that $\Delta_G(dX,dX,59a)> 0$ and $\Delta_G(dX,dX,71a)> 0$ for all the classes $dX$.

%It follows that
By Theorem \ref{gen}, $G$ can be generated by at most $3$ conjugates from each class $dX$. Since $\dim \Phi\geq 196882$ by  \cite{J}), by Lemma \ref{2.1} no element $g$ of prime-power order $d$ can be almost cyclic.

\section{Cyclic and almost cyclic elements in the representations of the covering groups}\label{covspor}
%\section{The covering groups}

The covering groups of the simple sporadic groups can be dealt with using the same machinery exploited above. As in the previous section, for technical reasons, we confine our analysis to the case of elements of prime-power order (modulo the centre) which can be represented by cyclic or almost cyclic matrices in faithful irreducible representations. The notation (notably for conjugacy classes) is the one fixed in the Introduction, following \cite{AOL} and \cite{GAP}.  

For the reader's sake, the following elementary observations seem to be in order:

%1) Suppose that $G$ is a covering group of the simple group $H$ (that is, $G$ is quasi-simple with $G/Z(G)=H$). Then, if $S$ is any generating set for $H$, its preimage in $G$ via the natural map is clearly a generating set for $G$. Obviously, if $S$ consists of elements of the same order, then its preimage in $G$ consists of elements of the same order modulo $Z(G)$.

1) An element $x$ of $G$ has prime-power order modulo $Z(G)$ if and only if $x$ is the product of a central element by an element, say $x_1$, of $G$ of prime-power order. Obviously, for any $F$-representation $\Phi$ of $G$, $\Phi(x)$ is cyclic (almost cyclic) if and only if $\Phi(x_1)$ is cyclic (almost cyclic).

2) Let  $\Phi$ be an $F$-representation of $G$ and let  $\tilde{\Phi}$ be the associated projective representation of its simple central quotient $H$ 
(defined by $\tilde{\Phi}(Z(G)x)= \Phi(x)$  for $x \in G$). Suppose that $x$ has order two modulo the centre, and $\Phi(x)$ is cyclic (almost cyclic).  Then $\Phi(x)$, and so also $\tilde{\Phi}(Z(G)x)$, is a pseudoreflection. But this contradicts Lemma \ref{wag}. 

%In view of 1), taking preimages we can transfer the information obtained in the previous sections on the generation by conjugates of the sporadic simple groups to their covering groups. In particular, the restrictions imposed by Lemma \ref{2.1} hold for the covering groups with no changes with respect to the corresponding  simple central quotients.
We will fully exploit the results on generation by conjugates obtained in Sections \ref{secgen} and \ref{covgen}.
Furthermore, in view of 1) and 2), we will only have to deal with the conjugacy classes of the covering group $G$ which consist of elements of prime-power order $d$ whose images in $H$ have order greater than two. Therefore, from now on, the notation $dX$ only refers to such classes.

\medskip

%Whenever possible, we start determining via the GAP  package  the instances in which $\Delta_G^*>0$, using the 'structure constants method'. 

We obtain the following results:

%\begin{enumerate}
%\item $6^. M_{22}$ can be generated by two conjugates from any of the classes $dX$;
%\item $12^. M_{22}$ can be generated by two conjugates from any of the classes $dX$;
%\item $2^. HS$ can be generated by two conjugates  from any of the classes $dX \neq 4b, 4c$, whereas it can be generated by three conjugates from any of the classes $dX=4b,4c$;
%\item $3^. McL$ can be generated by two conjugates from any of the classes $dX \neq  3c, 3d, 3e$, whereas it can be generated by three conjugates from any of the classes $dX=3c,3d,3e$;
%\item $2^.Fi_{22}$ can be generated by two conjugates from any of the classes $dX \neq  3a, 3b$, whereas it can be generated by three conjugates from any of the classes $dX=3a,3b$;
%\item $6^.Fi_{22}$ can be generated by two conjugates from any of the classes $dX\neq3c,3d,3e,3f$, whereas it can be generated by three conjugates from any of the classes $dX=3c,3d,3e,3f$;
%\item $3^.Fi_{24}'$ can be generated by two conjugates from any of the classes $dX\neq3c,3d,3e,3f$, whereas it can be generated by three conjugates from any of the classes $dX=3c,3d,3e,3f$;
%\item $2^.B$ can be generated by at most three conjugates from any of the classes $dX$.

%\end{enumerate}

\subsection{$G=6^. M_{22}$, $12^. M_{22}$, $2^. HS$,  $3^. McL$, $2^.Fi_{22}$, $6^.Fi_{22}$, $3^.Fi_{24}'$, $2^.B$ }
By Theorem \ref{gencov}, it follows from Lemma \ref{2.1} and  \cite{J} that no $g\in G$ belonging to any of the classes $dX$ can be almost cyclic.

\subsection{$G=2^.M_{12}$} 
$G$ can be generated by two conjugates from any of the classes $dX$.

In view of \cite{J} and Lemma \ref{2.1}, by inspecting the Brauer character tables and using the GAP routines  we obtain that, whenever $\ell$ does not divide $d$, cyclic or almost cyclic elements occur exactly according to the following table:

\begin{center}
\begin{tabular}{c|c|c|c}
$\ell$  & $\dim \Phi$ & $dX$ & type \\ \hline\hline
$\ell \nmid |G|$ & $10$ & $11a,b$ & cyclic \\ 
    & $12$ & $11a,b$ & almost cyclic \\ \hline
$3$ & $6$ & $5a$ & almost cyclic \\
    & $6$ & $8a,b,c,d$; $11a,b$ & cyclic \\
    & $10$ & $11a,b$ & cyclic \\  \hline
$5$ & $10$ & $11a,b$ & cyclic \\ 
    & $12$ & $11a,b$ & almost cyclic \\
\end{tabular} 
\end{center}

Again by  \cite{J} and Lemma \ref{2.1}, we may rule out the class $3a$ for $\ell=3$ and the class $5a$ for $\ell=5$. Thus, we are only left to examine the classes $11a$ and $11b$ for $\ell=11$.

%(i) If $\ell=3$, by \cite{HM} $\Phi$ must have degree $6$ (there are two such representations). Using MAGMA we see that the elements of order $3$ have as invariant factors either $\{(x-1)$, $(x-1)^2$, $(x-1)^3\}$ or $\{(x-1)^3$, $(x-1)^3\}$. So, they are not almost cyclic.

%(ii) If $\ell=5$, by \cite{HM} $\Phi$ must have either degree $10$ (there are two such representations), or degree $12$.  If $\dim \Phi=10$, we consider the restriction of $\Phi$ to a subgroup $H$ isomorphic to $M_{11}$. This restriction is irreducible; however, $M_{11}$ does not have irreducible representations admitting almost cyclic elements of order $5$.
%If $\dim \Phi=12$, then using MAGMA we see that the elements of order $5$ have invariant factors  $\{(x-1)$, $(x-1)$, $(x-1)^5$, $(x-1)^5\}$. So, they are not almost cyclic.

Now, if $\ell=11$, by \cite{HM} $\Phi$ must have either degree $10$ (there are two such representations) or degree $12$ . Let $g$ be an element of order $11$. If $\dim \Phi=10$, by Proposition \ref{Zal} $\Phi(g)$ is cyclic. If $\dim \Phi=12$, then $\Phi(g)$ is almost cyclic by Corollary \ref{l+1}.

\subsection{$G=2^.M_{22}$} 
$G$ can be generated by two conjugates from any of the classes $dX$.

In view of \cite{J} and Lemma \ref{2.1}, by inspecting the Brauer character tables and using the GAP routines  we obtain that, whenever $\ell$ does not divide $d$, cyclic or almost cyclic elements occur exactly according to the following table:

\begin{center} 
\begin{tabular}{c|c|c|c}
$\ell$  & $\dim \Phi$ & $dX$ & type \\ \hline\hline
$\ell \nmid |G|$ & $10$ & $11a,b$ & cyclic \\  \hline
$3$ & $10$ & $11a,b$ & cyclic \\ \hline
$5$ & $10$ & $11a,b$ & cyclic \\ \hline
$7$ & $10$ & $11a,b$ & cyclic \\ 
\end{tabular} 
\end{center}

Thus, we are left to examine the classes $7a$ and $7b$ for $\ell=7$ and the classes $11a$ and $11b$ for $\ell=11$.

Let us denote by $a$ and $b$ the 'standard generators' of $2^.M_{22}$, of order respectively $2$ and $4$, given in \cite{AOL}.

Let $\ell=7$. By \cite{HM} $\Phi$ must have degree $10$. 
Also, note that $(7a)^3\in 7b$. Pick $g=abab^3ab^2abab^3ab^2a$. Then $g$ has order $7$, and its invariant factors are $\{(x-1)^3$, $(x-1)^7\}$. So $g$ is not almost cyclic.

Let $\ell=11$. By \cite{HM} $\Phi$ must have degree $10$ (there are two such representations). If $g \in G$ has order $11$, then $g$ is cyclic by Proposition \ref{Zal}.

\subsection{$G=3^.M_{22}$} 
$G$ can be generated by two conjugates from any of the classes $dX$.
% and is $(2a,2a,2a,2a,11a)$-generated. 

In view of \cite{J} and Lemma \ref{2.1}, by inspecting the Brauer character tables and using the GAP routines  we obtain that, whenever $\ell$ does not divide $d$, cyclic or almost cyclic elements occur exactly according to the following table:

\begin{center} 
\begin{tabular}{c|c|c|c}
$\ell$  & $\dim \Phi$ & $dX$ & type \\ \hline\hline
$2$ & $6$ & $5a$ & almost cyclic \\ 
    & $6$ & $7a,b$; $11a,b$ & cyclic \\ 
\end{tabular} 
\end{center}
  
Again by \cite{J} and Lemma \ref{2.1}, we are left to examine only the classes $4a$, $4b$ and $8a$ for $\ell=2$.
Let us denote by $a$ and $b$ the 'standard generators' of $3^.M_{22}$, of order respectively $2$ and $4$, given in \cite{AOL}.

If $\ell=2$, by \cite{HM} $\Phi$ must have degree $6$ (there are two such representations).  Using MAGMA, we see that the invariant factors of the elements of order $4$ are either $\{(x-1)^3$, $(x-1)^3\}$ or $\{(x-1)^2$, $(x-1)^4\}$. So these elements are not almost cyclic. On the other hand, for an element of order $8$ the minimum polynomial and the characteristic polynomial coincide. So, the element is cyclic.

\subsection{$G=4^.M_{22}$} 
$G$ can be generated by two conjugates from any of the classes $dX$.
%$G$ is $(dX,dX,11a)$-generated for all the classes $dX$.
 %and is $(dX,dX,dX,11a)$-generated for the classes $dX=2b, 4c$. 
 By Lemma \ref{2.1} and \cite{J}, we only  have to examine the classes $11a$ and $11b$ for $\ell=7$. Inspection of the Brauer character tables shows that these elements are not almost cyclic.

\subsection{$G=2^.J_2$} 
$G$ can be generated by two conjugates from any of the classes $dX \neq 3a$;  while it can be generated by three conjugates from the class $dX=3a$.
%$G$ is $(dX,dX,7a)$-generated for all the classes $dX \neq 3a, 3b, 4b, 4c, 5a, 5b$, while it is  $(dX,dX,dX,7a)$-generated for the classes $dX=3a,3b,4b,4c,5a,5b$.
% and $(dX,dX,dX,dX,dX,7a)$-generated for the classes $dX=2b,2c$. 

Taking into account  \cite{J} and Lemma \ref{2.1},  we obtain the following:

(i) Whenever $\ell$ does not divide $d$, the Brauer character tables, via the GAP routines, show that cyclic or almost cyclic elements occur exactly according to the following table:

\begin{center} 
\begin{tabular}{c|c|c|c}
$\ell$  & $\dim \Phi$ & $dX$ & type \\ \hline\hline
$\ell \nmid |G|$ & $6$ & $7a$; $8a,b$ & cyclic\\ \hline
$3$ & $6$ & $7a$; $8a,b$ & cyclic \\ \hline
$5$ & $6$ & $7a$; $8a,b$ & cyclic \\ \hline
$7$ & $6$ & $8a,b$ & cyclic
\end{tabular} 
\end{center}

(ii) We are left to examine the class $3a$ for $\ell=3$, the classes $5a$ and $5b$ for $\ell=5$ and the class $7a$ for $\ell=7$.

If $\ell=3$, by \cite{HM} $\Phi$ must have degree $6$ (there are two such representations). However, using MAGMA, we see that the elements of order $3$ have as invariant factors either $\{(x-1)^2$, $(x-1)^2$, $(x-1)^2\}$ or  $\{(x-1)^3$, $(x-1)^3\}$.

If $\ell=5$, by \cite{HM} $\Phi$ must have degree $6$.  Using MAGMA, we see that the elements of order $5$ have as invariant factors either  $\{(x-1)^3$, $(x-1)^3\}$ or  $\{(x-1)^2$, $(x-1)^4\}$.

If $\ell=7$, by \cite{HM} $\Phi$ must have degree $6$ (there are two such representations). By Proposition \ref{Zal}, the elements of order $7$ are cyclic.

\subsection{$G=2^.Suz$} 
 $G$ can be generated by two conjugates from any of the classes $dX \neq 3a$, and by four conjugates from the class $3a$.

%$G$ is $(dX,dX,13a)$-generated for all the classes $dX \neq 3a, 4b, 4c$, it is $(dX,dX,dX,13a)$-generated for the classes $dX=4b,4c$, and it is $(3a,3a,3a,3a,13a)$-generated. 
In view of \cite{J} and Lemma \ref{2.1}, we need to examine only the case $\ell=3$.

If $3$ is coprime to $d$, inspection of the Brauer character table produces the following single occurrence: 

\begin{center} 
\begin{tabular}{c|c|c|c}
$\ell$  & $\dim \Phi$ & $dX$ & type \\ \hline\hline
$3$ & $12$ & $13a,b$ & cyclic \\ 
\end{tabular} 
\end{center}
  
Since, by \cite{HM}, $\Phi$ must have degree $12$, we are left to examine only the classes  $9a$ and $9b$ for $\ell=3$. Also, note that $(9a)^2\in 9b$. Let us denote by $a$ and $b$ the 'standard generators' of $2^.Suz$, of order respectively $4$ and $3$, given in \cite{AOL}. Pick $g=(ab)^3 ab^2 ab  (ab^2)^5 ab$. Then $g$ has order $9$, and its invariant factors are $\{(x-1)^4$, $(x-1)^8\}$. So $g$ is not almost cyclic.

\subsection{$G=3^.Suz$} 
$G$ can be generated by two conjugates from any of  the classes $dX \neq 3c$, $3d$, $3e$, and by four conjugates from any of 
% $(dX,dX,13a)$-generated for all the classes $dX \neq 3c$, $3d$, $3e$,
% is $(dX,dX,dX,13a)$-generated for the classes $dX=2a,2b$, 
 the classes $dX=3c,3d,3e$. In view of \cite{J} and Lemma \ref{2.1}, we need to examine only the case $\ell=2$.

If $2$ is coprime to $d$, inspection of the Brauer character table produces the following occurrences:

\begin{center} 
\begin{tabular}{c|c|c|c}
$\ell$  & $\dim \Phi$ & $dX$ & type \\ \hline\hline
$2$ & $12$ & $11a$ &  almost cyclic \\  
    & $12$ & $13a,b$ & cyclic 
\end{tabular} 
\end{center}

Since, by \cite{HM}, $\Phi$ must have degree $12$ (there are two such representations), we are left to examine only the classes $8a$, $8b$ and $8c$. Using MAGMA, we see that the invariant factors of an element $g$ of order $8$ are either $\{(x-1)^6$, $(x-1) ^6\}$, or $\{(x-1)^5$, $(x-1)^7\}$, or $\{(x-1)^4$, $(x-1)^8\}$. Hence $g$ is not almost cyclic.

\subsection{$G=6^.Suz$} 
$G$ can be generated by two conjugates from any of the classes $dX \neq 3c, 3d, 3e$, and by four conjugates from any of the classes $dX=3c,3d,3e$. 

Taking into account  \cite{J} and Lemma \ref{2.1},  we obtain the following:

(i) Whenever $\ell$ does not divide $d$, the Brauer character tables, via the GAP routines, show that cyclic or almost cyclic elements occur exactly according to the following table:

\begin{center} 
\begin{tabular}{c|c|c|c}
$\ell$  & $\dim \Phi$ & $dX$ & type \\ \hline\hline
$\ell \nmid |G|$ & $12$ & $11a$ & almost cyclic\\ 
    & $12$ & $13a,b$ & cyclic \\ \hline

$5$ & $12$ & $11a$ & almost cyclic \\
    & $12$ & $13a,b$ & cyclic \\\hline
$7$ & $12$ & $11a$ & almost cyclic \\
    & $12$ & $13a,b$ & cyclic \\\hline
$11$ & $12$ & $13a,b$ & cyclic \\ \hline
$13$ & $12$ & $11a$ & almost cyclic \\
\end{tabular} 
\end{center}

(ii) We are left to examine only the class $7a$ for $\ell=7$, the class $11a$ for $\ell=11$ and the classes $13a$ and $13b$ for $\ell=13$.

 Let us denote by $a$ and $b$ the 'standard generators' of $6^.Suz$, of order respectively $4$ and $3$, given in \cite{AOL}.

If $\ell=7$, by \cite{HM} $\Phi$ must have degree $12$ (two representations). Pick $g=((ab)^3 a^2bab)^6$. Then $g$ ha order $7$ and its invariant factors are $\{(x-1)^6$, $(x-1)^6\}$. So $g$ is not almost cyclic.

If $\ell=11$, by \cite{HM} $\Phi$ must have degree $12$ (two representations). Thus, an element $g$ of order $11$ is almost cyclic, by Corollary \ref{l+1}.

If $\ell=13$, by \cite{HM} $\Phi$ must have degree $12$ (two representations). Thus, an element $g$ of order $13$ is cyclic, by Proposition \ref{Zal}.\\

\medskip

%For the remaining covering groups, the GAP package does not contain the data of all maximal subgroups, and also some of the Brauer character tables are missing. However, as already done in the case of the groups $B$ and $M$, we may still deal with these groups computing more structure constants.

\subsection{$G=3^.J_3$}

%First of all, observe that there are no maximal subgroups of $G$ containing both elements of order $17$ and $19$ (e.g., cfr. \cite{W}). Computing the structure constants, we obtain the following:

%\begin{enumerate}
%\item $\Delta_G(dX,dX,17a)> 0$ for all the classes $dX\neq 3d,3e$;
%\item $\Delta_G(dX,dX,19a)> 0$ for all the classes $dX\neq 3c$;
%\item $\Delta_G(dX,dX,dX,17a)> 0$ for all the classes $dX=3d,3e$;
%\item $\Delta_G(dX,dX,dX,19a)> 0$ for the class $3c$.
%\end{enumerate}

%It follows that $G$ can be generated by at most $3$ conjugates from each class $dX \neq 2a,3c,3d,3e$ and by at most $4$ conjugates from each of the classes $dX =2a,3c,3d$ and $3e$. 
$G$ can be generated by two conjugates from any of the classes $dX$.

By \cite{HM}, either $\dim \Phi=9$, or $\dim \Phi=18$, or $\dim \Phi\geq 126$. In the latter case, in view of Lemma \ref{2.1} we get a contradiction (for all the classes $dX$). So, we may assume that either $\dim \Phi=9$ or $\dim \Phi= 18$.  Taking into account  \cite{J} and Lemma \ref{2.1},  we obtain the following:

(i) Whenever $\ell$ does not divide $d$, the Brauer character tables, via the GAP routines, show that cyclic or almost cyclic elements occur exactly according to the following table:

\begin{center} 
\begin{tabular}{c|c|c|c}
$\ell$  & $\dim \Phi$ & $dX$ & type \\ \hline\hline
$\ell \nmid |G|$ & $18$ & $17a,b$  & almost cyclic\\ 
    & $18$ & $19a,b$ & cyclic \\ \hline
$2$ & $9$ & $9a,b,c$; $17a,b$;  $19a,b$ & cyclic \\
    & $18$ & $17a,b$ & almost cyclic \\
    & $18$ & $19a,b$ & cyclic \\ \hline
$5$ & $18$ & $17a,b$ & almost cyclic \\
    & $18$ & $19a,b$ & cyclic \\ \hline 
$17$ & $18$ & $19a,b$ & cyclic \\ \hline
$19$ & $18$ & $17a,b$ & almost cyclic \\
\end{tabular} 
\end{center}
 
(ii) We are left to examine only the elements of order $8$ for $\ell=2$, of order $5$ for $\ell=5$, of order $17$  for $\ell=17$ and of order $19$ for $\ell=19$.

 Let us denote by $a$ and $b$ the 'standard generators' of $3^.J_3$, of order respectively $2$ and $3$, given in \cite{AOL}. Then:

(1) If $\ell=2$, let $g=((a b^2) ^4 a b)^3$. Then $g$ has order $8$. By Lemma  \ref{2.1}, $\dim \Phi=9$ (there are two such representations). The invariant factors of $g$ are $\{(x-1)$, $(x-1)^8\}$, so $g_8$ is almost cyclic. 

(2) If $\ell=5$, the elements of order $5$ are not almost cyclic by Lemma \ref{2.1}.

(3) If $\ell=17$, by \cite{J} $\Phi$ must have degree $18$ (there are four of these $\Phi$'s), and if $g$ is an element of order $17$, then $g$ is almost cyclic by Corollary \ref{l+1}.

(4) If $\ell=19$, by \cite{J} $\Phi$ must have degree $18$ (there are four of these $\Phi$'s), and the elements of order $19$ are cyclic by Corollary \ref{l+1}.

\subsection{$G=2^. Ru$}

$G$ can be generated by two conjugates from any of  the classes $dX$.

%First of all, observe that there are no maximal subgroups of $G$ containing both elements of order $13$ and $29$ (e.g., cfr. \cite{W}). Computing the structure constants, we obtain the following:

%\begin{enumerate}
%\item $\Delta_G(dX,dX,13a)> 0$ for all the classes $dX$;
%\item $\Delta_G(dX,dX,29a)> 0$ for all the classes $dX\neq 2b,2c$;
%\item $\Delta_G(dX,dX,dX,29a)> 0$ for the classes $dX=2b,2c$.
%\end{enumerate}

%It follows that $G$ can be generated by at most $3$ conjugates from each class $dX \neq 2b,2c$ and by at most $4$ conjugates from each of the classes $dX =2b$ and $2c$. 

By \cite{HM}, either $\dim \Phi=28$ or $\dim \Phi>250$. In the latter case,  by Lemma \ref{2.1} no $g\in G$ belonging to any of the classes $dX$ can be almost cyclic. So, we may assume that $\dim \Phi=28$ (there are two such representations), and we are only left to examine the elements of order $29$. A computation using GAP if $\ell \neq29$, and Corollary \ref{l+1} if $\ell=29$, show that these elements are cyclic.
%If $\ell$ does not divide the order of $g$, the Brauer character tables, via the GAP routines, show that, for $\ell=0,3,5,7,13$ $g$ is  not almost cyclic, except when $g$ belongs to one of the classes $29a$ and $29b$, in which case $g$ is cyclic.

%So, we are left to examine only the elements of order $29$ for $\ell=29$. 

%Let us denote by $a$ and $b$ the 'standard generators' of $2^.Ru$, both of order $4$, given in \cite{AOL}. Then:

%(i) If  $\ell=13$, pick $g=ab$. Then $g$ has order $13$, and its invariant factors are $\{(x-1)$, $(x-1)$, $(x-1)^{13}$, $(x-1)^{13}\}$. So $g$ is not almost cyclic. 

%These elements are cyclic by Corollary \ref{l+1}.

\subsection{$G=3^.O'N$}

$G$ can be generated by two conjugates from any of  the classes $dX$.

%Observe (e.g., cfr. \cite{W})  that there are no maximal subgroups of $G$ containing both elements of order $11$ and $31$. Computing the structure constants, we obtain the following:

%\begin{enumerate}
%\item $\Delta_G(dX,dX,11a)> 0$ for all the classes $dX$;
%\item $\Delta_G(dX,dX,31a)> 0$ for all the classes $dX\neq 2a,4a$;
%\item $\Delta_G(dX,dX,dX,31a)> 0$ for the classes $dX=2a,4a$.
%\end{enumerate}

%t follows that $G$ can be generated by at most $3$ conjugates from each class $dX\neq 2a,4a$ and by at most $4$ conjugates from each of the classes $dX =2a, 4a$. 

If $\ell \neq 7$, by \cite{HM} $\dim \Phi\geq 153$, and by Lemma \ref{2.1} no $g\in G$ belonging to any of the classes $dX$ can be almost cyclic. Again by \cite{HM}, if $\ell=7$ then either $\dim \Phi= 45$ (there are two representations of this degree), or $\dim \Phi>250$. As above, the latter case is ruled out by Lemma \ref{2.1}. In the former case,  $g\in G$  cannot be almost cyclic unless it belongs to one of the classes $31a$ and $31b$. However, for all these elements, since $\ell$ does not divide their order, we can inspect the Brauer character tables. They show that none of them can be almost cyclic.

%\subsection{$G=2^.Fi_{22}$}

%Observe (e.g., cfr. \cite{W}) that there are no maximal subgroups of $G$ containing both elements of order $11$ and $13$. Computing the structure constants, we obtain the following:

%\begin{enumerate}
%\item $\Delta_G(dX,dX,11a)> 0$ for all the classes $dX\neq 2b,2c, 2d,2e,2f, 3a,3b$;
%\item $\Delta_G(dX,dX,13a)> 0$ for all the classes $dX\neq 2b,2c,2d,2e, 3a$;
%\item $\Delta_G(dX,dX,dX,11a)> 0$ for the classes $dX=2d,2e, 2f ,3a,3b$;
%\item $\Delta_G(dX,dX,dX,13a)> 0$ for the classes $dX=2d,2e,3a$;
%\item $\Delta_G(dX,dX,dX, dX,dX,dX, 11a)> 0$ and $\Delta_G(dX,dX,dX, dX,dX,dX, 13a)> 0$ for the classes $dX=2b,2c$.
%\end{enumerate}

%t follows that $G$ can be generated by at most $3$ conjugates from each of the classes $dX\neq2b,2c,2d,2e,2f, 3a,3b$, by at most $5$ conjugates from each of the classes $dX =2d$, $2e$, $2f$, $3a$, and $3b$, and by at most $11$ conjugates from each of the classes $2b$ and $2c$. Since, by \cite{HM}, $\dim \Phi\geq 176$, we get a contradiction (for all the classes $dX$).

\subsection{$G=3^.Fi_{22}$}

$G$ can be generated by two conjugates from any of the classes $dX\neq3c,3d,3e,3f$, and can be generated by three conjugates from any of the classes $dX=3c,3d,3e,3f$.

%Observe (e.g., cfr. \cite{W}) that there are no maximal subgroups of $G$ containing both elements of order $11$ and $13$. Computing the structure constants, we obtain the following:
%\begin{enumerate}
%\item $\Delta_G(dX,dX,11a)> 0$ for all the classes $dX\neq 2a,2b,2c,3c,3d,3e,3f$;
%\item $\Delta_G(dX,dX,13a)> 0$ for all the classes $dX\neq 2a,2b,3c,3d,3e$;
%\item $\Delta_G(dX,dX,dX,11a)> 0$ for the classes $dX=2b,2c,3c,3d,3e,3f$;
%\item $\Delta_G(dX,dX,dX,13a)> 0$ for the classes $dX=2b,3c,3d,3e$;
%\item $\Delta_G(2a,2a,2a, 2a,2a,2a, dX)> 0$ for the classes $dX=11a,13a$.
%\end{enumerate}

%It follows that $G$ can be generated by at most $3$ conjugates from each class $dX\neq 2a,2b,2c,3c,3d,3e,3f$, by at most $5$ conjugates from each of the classes $dX =2b$, $2c$, $3c$, $3d$, $3e$, $3f$, and by at most $11$ conjugates from the class $2a$.
Now, by \cite{HM}, if $\ell\neq 2$ $\dim \Phi\geq 351$. Thus, this case is ruled out by Lemma \ref{2.1}. If $\ell=2$, then either $\dim \Phi=27$ or $\dim \Phi>250$. Again, the latter case is ruled out by Lemma \ref{2.1}. In the former case, $g\in G$  cannot be almost cyclic unless it belongs to one of the classes $16a$ and $16b$.

So, assume that $\ell=2$ and $\dim \Phi=27$ (there are exactly two such $\Phi$'s (see  \cite{N})). Let us denote by $a$ and $b$ the 'standard generators' of $3^.Fi_{22}$,  of order respectively $2$ and $13$, given in \cite{AOL}. 
 Note that
 % $(11a)^2\in 11b$, $(13a)^2\in 13b$ and
  $(16a)^5\in 16b$, 
%Pick $g=(ab)^3$. Then $g$ has order $11$, its minimum polynomial is $m_g(x)=(x^{11}-1)$ and its characteristic polynomial is $p_g(x)=(x^5 + wx^4 + x^3 + x^2 + w^2x + 1 )(m_g(x))^2$ (where $\\F_2[w] = F_4$). So $g$ is not almost cyclic.
%Next, let $g=b$, of order $13$. As $g$  has minimum polynomial $m_g(x)=(x^{13}-1)$ and characteristic polynomial $p_g(x)=(x-1)(m_g(x))^2$, $g$ is not almost cyclic.
and pick $g=((ba)^2 b^2a)^3$. Then $g$ has order $16$, and its invariant factors are $\{(x-1)$, $(x-1)^{10}$, $(x^{16} -1)\}$. So $g$ is not almost cyclic.

%\subsection{$G=6^.Fi_{22}$}

%Observe (e.g., cfr. \cite{W}) that there are no maximal subgroups of $G$ containing both elements of order $11$ and $13$. Computing the structure constants, we obtain the following:

%\begin{enumerate}
%\item $\Delta_G(dX,dX,11a)> 0$ for all the classes $dX\neq 2b,2c, 2d,2e,2f,3c,3d,3e,3f$;
%\item $\Delta_G(dX,dX,13a)> 0$ for all the classes $dX\neq 2b,2c,2d,2e,3c,3d,3e$;
%\item $\Delta_G(dX,dX,dX,11a)> 0$ for the classes $dX=2d,2e, 2f ,3c,3d,3e,3f$;
%\item $\Delta_G(dX,dX,dX,13a)> 0$ for the classes $dX=2d,2e,3c,3d,3e$;
%\item $\Delta_G(dX,dX,dX,dX, dX,dX, 11a)> 0$ and $\Delta_G(dX,dX,dX, dX,dX, 13a)> 0$ for the classes $dX=2b,2c$.
%\end{enumerate}

%It follows that $G$ can be generated by at most $3$ conjugates from each of the classes $dX\neq 2b,2c,2d,2e,2f,3c,3d,3e,3f$, by at most $5$ conjugates from each of the classes $dX =2d$, $2e$, $2f$, $3c$, $3d$, $3e$, $3f$, and by at most $11$ conjugates from the classes $2b$ and $2c$. Since, by \cite{HM}, $\dim \Phi\geq 1728$, we obtain a contradiction (for all the classes $dX$).

\subsection{$G=2^. Co_1$}

%Observe (e.g., cfr. \cite{W}) that there are no maximal subgroups of $G$ containing both elements of  order $13$ and $23$. Computing the structure constants, we obtain the following:

%\begin{enumerate}
%\item $\Delta_G(dX,dX,13a)> 0$ for all the classes $dX\neq 2b,2c, 3a,3b,4b,4c$;
%\item $\Delta_G(dX,dX,23a)> 0$ for all the classes $dX\neq 2b,2c,2d,3a,4a,4b,4c$;
%\item $\Delta_G(dX,dX,dX,13a)> 0$ for the classes $dX=2b,2c,3a,3b,4b,4c$;
%\item $\Delta_G(dX,dX,dX,23a)> 0$ for the classes $dX=2b,2c,2d,4a,4b,4c$;
%\item $\Delta_G(3a,3a,3a,3a, 23a)> 0$.
%\end{enumerate}

%It follows that $G$ can be generated by at most $3$ conjugates from each of the classes $dX\neq 2b,2c,2d,3a,3b,4a,4b,4c$, by at most $4$ element from each of the  classes $dX=2d,3b,4a$,  by at most $5$ conjugates from each of the classes $dX =2b,2c,4b,4c$, and by at most $6$ conjugates from the class $3a$. 
Here $G$ is generated by two conjugates from any of the classes $dX$, except for the class $3a$, in which case three conjugate generators suffice.

By \cite{HM}, either $\dim \Phi=24$ or $\dim \Phi> 250$. The latter case is ruled out  by Lemma \ref{2.1}. So, assume that $\dim \Phi=24$ (such a representation only occurs if $\ell \neq2$). 
%In view of Lemma \ref{2.1}, we only need to examine elements of order $13$, $16$, or $23$.

Taking into account  \cite{J} and Lemma \ref{2.1},  we obtain the following:

(i) Whenever $\ell$ does not divide $d$ and $\ell \neq 3,5,$ the Brauer character tables, via the GAP routines, show that cyclic or almost cyclic elements occur exactly according to the following table:

\begin{center} 
\begin{tabular}{c|c|c|c}
$\ell$  & $\dim \Phi$ & $dX$ & type \\ \hline\hline
$\ell \nmid |G|$ & $24$ & $23a,b$ & almost cyclic \\ \hline 
$7$ & $24$ & $23a,b$ & almost cyclic \\ \hline 
$11$ & $24$ & $23a,b$ & almost cyclic \\ \hline 
$13$ & $24$ & $23a,b$ & almost cyclic \\ 
\end{tabular} 
\end{center}

(ii) If $\ell=3$ or $5$, GAP does not give information on the relevant Brauer characters. However, we may still obtain the desired answers.

First of all observe  that, in view of Lemma \ref{2.1}, we only need to examine elements of $G$ of order $13$, $16$ and $23$. Furthermore, $G$ has three classes of elements of order $16$: a class $16a$ whose elements have centralizer of order $2^6$ and two classes $16b$ and $16c$ whose elements have centralizers of order $2^7$. Also, note that $(23a)^5\in 23b$.

Now, let us denote by $a$ and $b$ the 'standard generators' of $2^.Co_1$,  of order respectively $4$ and $3$, given in \cite{AOL}.  The following holds:

(a) Set  $g_{13}=(b^2 ab^2 a^3b  ab^2a b^2 a b a b^2 ab a^3 ba)^3$  and $g_{23}=(ba)^2 b^2 a^3 b^2 ab^2 a^2 (a b^2)^3 a b a b^2 ab a^3ba$. Then each $g_i$ has exactly order $i$, and we see that
%We have to consider only the following cases:
%\begin{enumerate}
the invariant factors of $g_{13}$ are $\{(x-1)^{12}$, $(x-1)^{12}\}$ and those of  $g_{23}$ are $\{(x-1)$, $(x-1)^{23}\}$. So $g_{23}$ is almost cyclic, whereas $g_{13}$ is not.

(b)  The element $g=(ab)^5(ab^2)(ab)$ has order $16$ and its centralizer has order $2^6$, so it belongs to the class $16a$. This element is not almost cyclic, since it has minimum polynomial $m_g(x)=\frac{x^{16}-1}{x^2+1}$ and characteristic polynomial $p_g(x)=\frac{(m_g(x))^2}{x^4+1}$. Next, let $g=(bab^2 a)^3 (ba)^4  ba^2 (ba)^2 b^2 ab  (a b^2)^3 a b a(ab)^5  ab^2ab$. It can be checked that both $g$ and $a^2g$ have order $16$ and centralizer of order $2^7$. Moreover, they are not conjugate to each other; hence, they are representatives of the classes $16b$ and $16c$. Neither of them is almost cyclic, since for both of them the minimum polynomial is $m(x)=\frac{x^{16}-1}{x+1}$, whereas the characteristic polynomial is $m(x)(x-1)(x^8+1)$.

%(c) Finally, let us turn to the elements of order $9$ ($G$ has three conjugacy classes of such elements). In this case, for computational reasons, it is convenient to restrict to a subgroup of $G$. We observe that $G$ has a maximal subgroup $H$ of type $6^.Suz:2$, which contains elements of all the three conjugates classes of elements  of $G$ of order $9$. If $\ell=3$, then a MAGMA computation shows that the invariant factors of these elements are either $\{(x-1)$, $(x-1)^7$, $(x-1)^7$, $(x-1)^9\}$, or $\{(x-1)$, $(x-1)^7$, $(x-1)^8$, $(x-1)^8\}$. So, they are not almost cyclic. If $\ell=5$, we may inspect the Brauer character table of $H$ (which is contained in GAP), concluding that also in this case the elements under exam are not almost cyclic.

(iii) We are now left to examine the cases $\ell=d$ for $\ell=13$  and $23$. We get the following:

%If $\ell=11$, the element $g_{11}$  is not almost cyclic, as its invariant factors are $\{(x-1)$, $(x-1)$, $(x-1)^{11}$, $(x-1)^{11}\}$.

If $\ell=13$,  the invariant factors of $g_{13}$ are $\{(x-1)^{12}$, $(x-1)^{12}\}$. So this element is not almost cyclic.

If $\ell=23$: the elements of order $23$ are almost cyclic by Corollary \ref{l+1}.

\section{Cyclic and almost cyclic elements in the representations of sporadic groups: the results}\label{repres}

The results that we have obtained are assembled in the following Theorems, whose proof is embodied in the analysis carried out in Sections \ref{spor} and \ref{covspor}:
\begin{theor}\label{cyclic}
Let $G$ be a quasi-simple finite sporadic group, $F$ be an algebraically closed field of characteristic $\ell$ and $\Phi$ be an irreducible faithful representation of $G$ over $F$. Let $g$ be an element of $G$ and suppose that $g=g_1z$, where  $g_1 \in G$ has prime-power order $d>1$ and $z \in Z(G)$. Denote by $dX$ the conjugacy class of $g_1$. Then $\Phi(g)$ is cyclic if and only if one of the cases listed in the following table occurs:

\begin{center}
\begin{tabular}{c|c|c|c}
$G$ & $\dim \Phi$ & $dX$ & $\ell$ \\ \hline \hline
$M_{11}$ & $5$ & $5a; 8a,b; 11a,b$ & $3$\\
{} & $9$ & $11a,b$ & $11$\\ 
{} & $10$ & $11a,b$ & any\\
{} & $11$ & $11a,b$ & $\neq2,3$\\ \hline
$M_{12}$ & $10$ & $11a,b$ & $2,3$ \\ 
{} & $11$ & $11a,b$ & $\neq2,3$ \\ \hline
$2^.M_{12}$ & $6$ & $8a,b,c,d; 11a,b$ & $3$ \\ 
{} & $10$ & $11a,b$ & $\neq2$ \\ \hline
$M_{22}$ & $10$ & $11a,b$ & $2$\\ \hline
$2^.M_{22}$ & $10$ & $11a,b$ & $\neq2$\\ \hline
$3^.M_{22}$ & $6$ & $7a,b$; $8a$; $11a,b$ & $2$\\ \hline
$M_{23}$ & $11$ & $11a,b; 23a,b$ & $2$ \\ 
{} & $21$ & $23a,b$ & $23$ \\ 
{} & $22$ & $23a,b$ & $\neq2,23$ \\ \hline
$M_{24}$ & $11$ & $11a; 23a,b$ & $2$ \\ 
{} & $22$ & $23a,b$ & $3$ \\ 
{} & $23$ & $23a,b$ & $\neq2,3$ \\ \hline
$J_1$ & $7$ & $7a$; $11a$;  $19a,b,c$ & $11$ \\ \hline
$J_2$ & $6$ & $7a;8a$ & $2$\\ \hline
$2^.J_2$ & $6$ & $7a$; $8a,b$ & $\neq2$ \\ \hline
$J_3$ & $18$ & $19a,b$  & $3$ \\ \hline
$3^.J_3$ & $9$ & $9a,b,c;17a,b;19a,b$ & $2$ \\ 
{} & $18$ & $19a,b$ & $\neq3$ \\ \hline
$Ru$ & $28$ & $29a,b$ & $2$\\ \hline
$2^.Ru$ & $28$ & $29a,b$ & $\neq2$\\ \hline
$2^.Suz$ & $12$ & $13a,b$ & $3$ \\ \hline
$3^.Suz$ & $12$ & $13a,b$ & $2$ \\ \hline
$6^.Suz$ & $12$ & $13a,b$ & $\neq2,3$ \\ \hline
$Co_3$ & $22$ & $23a,b$ & $2,3$ \\ 
{} & $23$ & $23a,b$ & $\neq2,3$ \\ \hline
$Co_2$ & $22$ & $23a,b$ & $2$ \\ 
{} & $23$ & $23a,b$ & $\neq2$ \\
\end{tabular}
\end{center}

\end{theor}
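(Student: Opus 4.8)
The plan is to establish Theorem \ref{cyclic} as a pure bookkeeping consequence of the case analysis already performed in Sections \ref{spor} and \ref{covspor}. No new mathematical argument is needed; the proof is essentially the statement ``collect and tabulate everything proved above''. First I would fix the general setup: given $g=g_1z$ with $g_1$ of prime-power order $d>1$ and $z\in Z(G)$, observe (exactly as in observation 1) of Section \ref{covspor}) that $\Phi(g)$ is cyclic if and only if $\Phi(g_1)$ is cyclic, so the problem reduces to elements of prime-power order, and we may speak of the class $dX$ of $g_1$. Also recall that $d>2$ may be assumed, since by Lemma \ref{wag} no involution of a (quasi-)simple sporadic group can be represented by a cyclic matrix; this disposes of the classes with $d=2$.

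Next I would organize the verification group by group, following precisely the subsections of Sections \ref{spor} and \ref{covspor}. For each quasi-simple sporadic group $G$, Lemma \ref{2.1} together with the bounds on $\alpha_G(g)$ from Theorems \ref{gen} and \ref{gencov} and the minimal-degree data of \cite{J} restricts the pairs $(\dim\Phi,dX)$ that can possibly yield a cyclic element; for the surviving finitely many pairs, the analysis in Sections \ref{spor}--\ref{covspor} determines in each case whether $\Phi(g)$ is cyclic, almost cyclic (but not cyclic), or neither, using the Brauer character tables via GAP when $\ell\nmid d$, the explicit MAGMA invariant-factor computations when $\ell\mid d$, and Proposition \ref{Zal} and Corollary \ref{l+1} for the defining-characteristic Sylow-$\ell$ cases. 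I would then simply read off from those subsections exactly the $(G,\dim\Phi,dX,\ell)$ quadruples for which the conclusion was ``cyclic'', and check that they coincide, line by line, with the rows of the table in the statement. A small technical point worth flagging explicitly: whenever two (or more) representations of $G$ have the same degree, the relevant subsection verified that the answer does not depend on the choice of representation (as promised in the Notation section), so listing only $\dim\Phi$ is legitimate; likewise, powering relations such as $(11a)^2\in 11b$, $(23a)^5\in 23b$, etc., which were recorded case by case, show that it suffices to list the class up to the algebraic conjugacy $\langle g\rangle$ induces, justifying entries like ``$11a,b$''.

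The ``if'' direction is then immediate: each tabulated row was shown in Sections \ref{spor}--\ref{covspor} to give a cyclic $\Phi(g)$. For the ``only if'' direction one must argue that no pair outside the table yields a cyclic element; this is exactly the content of the exhaustive elimination in those sections --- every pair surviving the Lemma \ref{2.1} degree bound was examined and found to be either cyclic (hence in the table), almost cyclic but not cyclic, or neither, and every pair not surviving the bound is excluded because a cyclic matrix of order $d$ has degree at least that of its (cyclic) action, forcing $\dim\Phi\le \alpha_G(g)(d-1)$ by Lemma \ref{2.1}. I expect the only real ``obstacle'' to be clerical: one must make sure the transcription from the many subsection-level tables and explicit computations into the single master table is complete and consistent (no group omitted, no $\ell$-range mislabelled, the covering-group rows such as $2^.M_{12}$, $3^.M_{22}$, $3^.J_3$, $2^.Suz$, etc. correctly merged with the simple-group rows), and that the entries for which the conclusion was ``almost cyclic'' are correctly excluded here and deferred to Theorem \ref{almost}. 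Since all the underlying case work is already in place, the proof itself is the single sentence that the table is precisely the list of cyclic occurrences extracted from that case work.
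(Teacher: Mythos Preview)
Your proposal is correct and matches the paper's own treatment exactly: the paper states that the proof of Theorem \ref{cyclic} ``is embodied in the analysis carried out in Sections \ref{spor} and \ref{covspor}'', i.e., the theorem is precisely the tabulated summary of the cyclic cases extracted from that case-by-case work. The reductions you flag (passing from $g$ to $g_1$, ruling out $d=2$ via Lemma \ref{wag}, and the independence from the choice of representation of a given degree) are exactly the ones the paper relies on.
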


\begin{theor}\label{almost}
Let $G$ be a quasi-simple finite sporadic group, $F$ be an algebraically closed field of characteristic $\ell$ and $\Phi$ be an irreducible faithful representation of $G$ over $F$. Let $g$ be an element of $G$ and suppose that $g=g_1z$, where  $g_1 \in G$ has prime-power order $d>1$ and $z \in Z(G)$. Denote by $dX$ the conjugacy class of $g_1$. Then $\Phi(g)$ is almost cyclic, but not cyclic, if and only if one of the cases listed in the following table occurs:

\begin{center}
\begin{tabular}{c|c|c|c}
$G$ & $\dim \Phi$ & $dX$ & $\ell$ \\ \hline \hline
$M_{11}$& $5$ & $4a$ & $3$ \\
& $9$ & $8a,b$ & $11$ \\ \hline
$2^.M_{12}$ & $6$ & $5a$ & $3$  \\
& $12$ & $11a,b$ & $\neq 2,3$ \\ \hline
$3^.M_{22}$ & $6$ & $5a$ & $2$\\ \hline
$J_1$ & $14$ & $19a,b,c$ & $11$ \\
& $20$ & $19a,b,c$ & $2$\\ \hline
$J_3$ & $18$ & $17a,b$ & $3$ \\ \hline
$3^.J_3$ & $9$ & $8a$ & $2$ \\
{} & $18$ & $17a,b$ & $\neq 3 $\\\hline
$3^.Suz$ & $12$ & $11a$ & $2$ \\ \hline
$6^.Suz$ & $12$ & $11a$ & $\neq 2,3$ \\ \hline
$Co_1$ & $24$ & $23a,b$ & $2$\\ \hline
$2^.Co_1$ & $24$ & $23a,b$ & $\neq 2 $\\
\end{tabular}
\end{center}

\end{theor}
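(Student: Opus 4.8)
\emph{Proof plan.} The plan is to establish Theorem~\ref{almost} (together with Theorem~\ref{cyclic}) by the case-by-case analysis carried out in Sections~\ref{spor} and~\ref{covspor}, running over all quasi-simple sporadic groups $G$. First I would make the standard reductions. By Lemma~\ref{wag} no involution of a sporadic simple group is represented by a cyclic matrix, and an almost cyclic (non-cyclic) matrix of order $2$ is exactly a pseudoreflection, so it suffices to treat elements whose relevant order is a prime power $d>2$. Since $\Phi(g_1z)$ is cyclic (resp.\ almost cyclic) if and only if $\Phi(g_1)$ is, for $z\in Z(G)$, one may assume $g=g_1$ has prime-power order; and for the covering groups the second observation of Section~\ref{covspor} shows that an image of order two modulo the centre would again yield a pseudoreflection, contradicting Lemma~\ref{wag}, so only classes whose image in the simple quotient has order $>2$ survive.

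Next, for each $G$ and each admissible class $dX$, I would invoke Lemma~\ref{2.1} with $m=\alpha_G(g)$ read off from Theorem~\ref{gen} or Theorem~\ref{gencov}: an almost cyclic element of order $d$ can occur only in a faithful irreducible representation of degree $n\le \alpha_G(g)(d-1)$. Comparing this ceiling with the minimal-degree data of Jansen~\cite{J} and Hiss--Malle~\cite{HM} eliminates, for the overwhelming majority of groups and classes, every candidate representation; in particular all the ``large'' groups $McL,He,Suz,O'N,J_4,HN,Th,Fi_{22},Fi_{23},Fi_{24}',B$ and the Monster, and most covering groups, are disposed of outright. What remains in each case is a short finite list of triples $(G,dX,\dim\Phi)$.

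For this residual list I would split on whether $\ell$ divides $d$. If $\ell\nmid d$, the element is semisimple, its eigenvalue multiplicities are read off from the (known) Brauer character table of $G$ via GAP, and cyclicity/almost cyclicity is decided directly; this handles the bulk of the surviving entries (e.g.\ $M_{11}$ in characteristic $3$, and $J_1,J_3,3^.J_3,3^.Suz,6^.Suz,2^.Co_1$ in cross characteristic). If $\ell\mid d$ and the Sylow $\ell$-subgroup is cyclic of order $\ell$, Proposition~\ref{Zal} and Corollary~\ref{l+1} give the answer from $\dim\Phi\in\{\ell,\ell+1\}$ (covering, e.g., elements of order $11,13,17,19,23,29$ in defining characteristic). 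In the remaining defining-characteristic cases --- chiefly elements of order $8$, $9$, $16$ with $\ell=2,3$ --- no such clean criterion applies, and one exhibits an explicit element as a word in the ATLAS standard generators and computes its invariant factors in the relevant small-degree matrix representation (obtained from \cite{AOL}) using MAGMA. Finally the individual conclusions are assembled into the two tables, together with the uniqueness statements needed for the critical small representations, notably the $24$-dimensional representation of $Co_1$ in characteristic $2$, verified independently in the Appendix.

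The main obstacle is not a single deep argument but completeness and the explicit computations: one must be certain that Lemma~\ref{2.1}, fed the correct $\alpha_G(g)$ for \emph{every} prime-power class of \emph{every} one of the $26$ groups and all their covering groups, really reduces the search to a finite checkable list, and one must carry out and certify the MAGMA invariant-factor computations for the defining-characteristic leftovers. A subsidiary point requiring care is that whenever a group has several irreducible representations of a given degree, the answer must be shown independent of the choice, so that the shorthand ``representation $=$ its degree'' used throughout is legitimate; this too is checked case by case in Sections~\ref{spor}--\ref{covspor}.
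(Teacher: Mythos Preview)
Your proposal is correct and follows essentially the same approach as the paper: the reductions via Lemma~\ref{wag}, the degree bound from Lemma~\ref{2.1} combined with the generation data of Theorems~\ref{gen} and~\ref{gencov} and the minimal-degree tables of \cite{J,HM}, the split into $\ell\nmid d$ (Brauer characters via GAP), cyclic Sylow $\ell$-subgroup (Proposition~\ref{Zal} and Corollary~\ref{l+1}), and the residual MAGMA invariant-factor computations on ATLAS words, together with the Appendix for the uniqueness of the $24$-dimensional $Co_1$-module in characteristic~$2$, are exactly what the paper does in Sections~\ref{spor} and~\ref{covspor}.
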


\bigskip

\section*{Appendix}

\medskip

It is well known that  the group $G=Co_1$ has an irreducible representation $\Phi$ of degree $24$ in characteristic $2$. However, it seems that the uniqueness of such a representation has not yet  been settled in the existing literature. Here we prove the uniqueness of such a $\Phi$ working out its restrictions to certain maximal subgroups $H_i$'s of $G$, whose Brauer character table is known for $\ell=2$.  In this way, we show that $G$ has a unique irreducible Brauer character $\phi$ of degree $24$ for $\ell=2$. We keep the notation of GAP for subgroups, irreducible Brauer characters and classes.\\

Let us first consider the maximal subgroup $H_1=Co_2$ and its irreducible Brauer characters for $\ell=2$. This group has (obviously) a unique character $\chi_1$ of degree $1$ and a unique character $\chi_2$ of degree $22$, while the other characters have degree greater than $24$. So, necessarily, $\phi|_{H_1}=2\cdot \chi_1+\chi_2$.

Next, consider $H_2=3.Suz.2$. This group has a unique character $\chi_1$ of degree $1$ and a unique character $\chi_{14}$ of degree $24$, while the other characters have degree greater than $24$. So, necessarily, $\phi|_{H_2}=\chi_{14}$.

Using the previous two subgroups we can determine the value of $\phi$ on the classes $7a$ and $7b$ of $G$  (since $H_1$ contains elements of the class $7b$ and $H_2$ contains elements of the class $7a$). We get that $\phi(7a)=-4$ and $\phi(7b)=3$. These values will be used in the following.

Now, consider the subgroup $H_6=U_6(2).3.2$. This group has a unique character $\chi_1$ of degree $1$, a unique character $\chi_{2}$ of degree $2$ and a unique character $\chi_3$ of degree $20$, while the other characters have degree greater than $24$. Furthermore, it contains elements of the class $7b$ of $G$ (labelled $7a$ in $H_6$) for which $\chi_1(7a)=1$, $\chi_2(7a)=2$ and $\chi_3(7a)=-1$. So, $\chi_3$ must be a component of $\phi|_{H_6}$. We have three possible decompositions fo $\phi|_{H_6}$ : $4\cdot \chi_1+\chi_3$, $2\cdot \chi_1+\chi_2+\chi_{3}$ or  $2\cdot \chi_2+\chi_3$.
Looking at the classes $3a$, $3c$ and $3e$ of $H_6$, we have a priori the following possibilities:

\begin{center}
 \begin{tabular}{l|ccc}
 $\phi|_{H_6}$ & $3a$ & $3c$ & $3e$\\\hline
  $4\cdot \chi_1+\chi_3$ & $6$ & $6$ & $12$ \\
  $2\cdot \chi_1+\chi_2+\chi_{3}$ & $6$ & $6$ & $9$ \\
   $2\cdot \chi_2+\chi_3$ & $6$ & $6$ & $6$ \\
 \end{tabular}
 \end{center}

However,  the classes $3a$, $3c$ and $3e$ of $H_6$ are fused into the class $3b$ of $G$. This forces $\phi|_{H_6}=2\cdot \chi_2+\chi_3$.

Next, consider the subgroup $H_7=(A_4\times G_2(4)):2$.  This group has a unique character $\chi_1$ of degree $1$, a unique character $\chi_{11}$ of degree $2$, and three characters $\chi_2$, $\chi_{12}$, $\chi_{13}$ of degree $12$, while all the other characters have degree greater than $24$. Moreover, this subgroup contains elements belonging to the class $7a$ of $G$ (also labelled $7a$ in $H_7$) for which $\chi_1(7a)=1$, $\chi_{11}(7a)=2$ and $\chi_2(7a)=\chi_{12}(7a)=\chi_{13}(7a) = -2$. 
So, necessarily,  $\phi|_{H_7}$ can only have components of degree $12$, and hence $6$ possible distinct decompositions.

Looking at the classes $3a$, $3c$, $15e$ and $15f$ of $H_7$, we have a priori the following possibilities:

\begin{center}
 \begin{tabular}{l|cc|cc}
 $\phi|_{H_7}$ & $3a$ & $3c$ & $15e$ & $15f$ \\\hline
  $ \chi_2+\chi_{12}$ & $-12$ & $6$  & $(-3-i\sqrt{15})/2$ & $(-3+i\sqrt{15}/2$ \\
  $\chi_2+\chi_{13}$ & $-12$ & $6$ & $(-3+i\sqrt{15})/2$ &  $(-3-i\sqrt{15})/2$ \\
   $\chi_{12}+\chi_{13}$ & $-12$ & $-12$ & $3$ & $3$ \\
   $2\cdot \chi_2$ & $-12$ & $24$ & $-6$ & $-6$ \\
   $2\cdot \chi_{12}$ & $-12$ & $-12$ &  $3-i\sqrt{15}$ & $3+i\sqrt{15}$ \\
   $2\cdot \chi_{13}$ & $-12$ & $-12$ & $3+i\sqrt{15}$ & $3-i\sqrt{15}$\\
 \end{tabular}
 \end{center}

However, the classes $3a$ and $3c$ of $H_7$ are fused into the class $3a$ of $G$ and the classes $15e$ and $15f$ of $H_7$  are fused into the class $15a$ of $G$. 

 This forces $\phi|_{H_7}=\chi_{12}+\chi_{13}$.\\

 Finally, let us consider the subgroup $H_{12}=(A_5\times J_2):2$. This group has a unique character $\chi_1$ of degree $1$, two characters, $\chi_8$ and $\chi_{18}$, of degree $4$, a unique character $\chi_2$ of degree $12$, two characters, $\chi_9$ and $\chi_{10}$, of degree $24$, while all the other characters have degree greater than $24$.  Furthermore, this subgroup contains elements belonging to the class $7a$ of $G$ (also labelled $7a$ in $H_{12}$) for which $\chi_1(7a)=1$, $\chi_{8}(7a)=\chi_{18}(7a)=4$, $\chi_2(7a)=-2$ and $\chi_9(7a)=\chi_{10}(7a)=-4$. So, we have three possible decompositions for $\phi|_{H_{12}}$: $2\cdot \chi_2$, $\chi_9$ or $\chi_{10}$.
Looking at the classes $3a$, $3c$, $5b$, $5c$ and $5d$ of $H_{12}$, we have a priori the following possibilities:

 \begin{center}
 \begin{tabular}{l|cc|ccc}
 $\phi|_{H_{12}}$ & $3a$ & $3c$ & $5b$ & $5c$ & $5d$ \\\hline
  $ 2\cdot \chi_2$ & $-12$ & $24$ & $-6$ & $24$ & $4$\\
  $\chi_9$ & $-12$ & $-12$ & $-6$ & $-6$ & $-6$\\
   $\chi_{10}$ & $-12$ & $-12$ & $-6$ & $-6$ & $4$\\
 \end{tabular}
 \end{center}
 
   However, the classes $3a$ and $3c$ of $H_{12}$ are fused into the class $3a$ of $G$ and the classes $5b$, $5c$ and $5d$ of $H_{12}$  are fused into the class $5a$ of $G$. 

  This forces $\phi|_{H_{12}}=\chi_9$.
  \medskip 
 
 Now, since every conjugacy class of $G$ has a non-trivial intersection with at least one of the maximal subgroups considered above, we conclude that the value of $\phi$ is uniquely determined. In other words, $G$ has a unique irreducible representation of degree $24$ in characteristic $2$.

\medskip

Authors' addresses:

\medskip

\author{%\footnotesize
L. DI MARTINO: 
\address{Universit\`a degli Studi di Milano-Bicocca, Dipartimento
di Matematica e Applicazioni, via R. Cozzi 53,  20125 Milano,
Italy. e-mail:  lino.dimartino@unimib.it}

\author{M.A. PELLEGRINI}:
\address{Departamento de Matem\'atica, Universidade de Bras\'ilia, 70910-900 Bras\'ilia - DF, Brazil}. e-mail: pellegrini@unb.br}

\author{A.E. ZALESSKI\address{:
\address{Universit\`a degli Studi di Milano-Bicocca, Dipartimento
di Matematica e Applicazioni, via R. Cozzi 53,  20125 Milano,
Italy. e-mail: alexandre.zalesskii@gmail.com}

\end{document}